\newcommand{\y}{\wedge}
\newcommand{\R}{\mathbb{R}}
\newcommand{\Q}{\mathbb{Q}}
\renewcommand{\emptyset}{\varnothing}
\newcommand{\lb}{\langle}
\newcommand{\rb}{\rangle}
\newcommand{\ent}{\Rightarrow}
\newcommand{\tne}{\Leftarrow}
\renewcommand{\phi}{\varphi}
\newcommand{\defi}{\mathrel{\mathop:}=}
\newcommand{\et}{\mathrel{\&}}
\newcommand{\dltr}{\mathrel{\delta}}
\newcommand{\om}{\omega}
\newcommand{\calF}{\mathcal{F}}
\newcommand{\union}{\mathop{\textstyle\bigcup}}
\DeclareRobustCommand\thinbfseries{%
  \not@math@alphabet\bfseries\mathbf
  \fontseries b\selectfont
}
\newcommand{\axiomas}[1]{\mathit{#1}}
\newcommand{\ZF}{\axiomas{ZF}}
\newtheorem{theorem}{Theorem}[section]
\newtheorem{lemma}[theorem]{Lemma}
\newtheorem{prop}[theorem]{Proposition}
\newtheorem{corollary}[theorem]{Corollary}
\newtheorem{claim}[theorem]{Claim}
\theoremstyle{definition}
\newtheorem{question}[theorem]{Question}
\newtheorem{definition}[theorem]{Definition}
\newtheorem{example}[theorem]{Example}
\theoremstyle{remark}
\newtheorem{remark}[theorem]{Remark}
\begin{document}
\title{Definability of band structures on posets}
\author{Joel Kuperman\thanks{
    Universidad Nacional de Córdoba. 
    Facultad de Matemática, Astronomía,  Física y
    Computación. \\
    \indent \phantom{*} Centro de Investigación y Estudios de Matemática (CIEM-FaMAF).
    Córdoba. Argentina.\\
    Supported by Secyt-UNC project 33620230100751CB and Conicet PIP project 11220210100508CO.}
  \and
  Alejandro Petrovich\thanks{
    Departamento de Matemática, Facultad de Ciencias Exactas y
    Naturales, UBA. 
    Departamento de Matemática, Facultad de Ciencias Exactas y
    Naturales, UNLPAM.
  }
  \and
  Pedro Sánchez Terraf\footnotemark[1]
}
\maketitle

\begin{abstract}
  The idempotent semigroups (bands) that give rise to partial orders by defining
  $a \leq b \iff a \cdot b = a$ are the \emph{right-regular} bands (RRB), which are
  axiomatized by $x\cdot y \cdot x = y \cdot x$. In this work we consider the
  class of \emph{associative posets}, which comprises all partial orders
  underlying right-regular bands, and study to what extent the ordering
  determines the possible “compatible” band structures and their canonicity.

  We show that the class of
  associative posets in the signature $\{ \leq \}$ is not first-order
  axiomatizable. We also show that the Axiom of Choice is equivalent over $\ZF$
  to the fact that every tree with finite branches is associative.

  We study the smaller class of “normal” posets (corresponding to right-normal
  bands) and give a structural characterization.
\end{abstract}
\section{Introduction}
\label{sec:introduction}

Idempotent semigroups (\emph{bands}) carry a natural
quasiorder structure given by
\begin{equation}\label{eq:quasiorder}
  a \lesssim b \iff a \cdot b = a.
\end{equation}
The associated equivalence relation, $\mathfrak{L} \defi (\lesssim) \cap (\gtrsim)$ (corresponding to Green's equivalence $\mathfrak{l}$ \cite{Green})
is not always a congruence relation, but only for “left-semiregular” ones
\cite{Mistlbacher_1991}. In particular, the variety of \emph{right-regular} bands (RRB),
axiomatized by the equation $x\cdot y \cdot x = y\cdot x$, are characterized by the fact that $\mathfrak{l}$ is
the identity and hence the quasiorder (\ref{eq:quasiorder}) is actually a
partial order.

In this paper, we are interested in studying the class of \emph{associative
posets}, which comprises all partial orders underlying right-regular bands. In
doing so, we will interpret some constructions that give rise to RRBs from an
order-theoretical point of view. This vantage point proves to be useful in a
number of situations, e.g. the characterization of all varieties of bands according
to some particular congruences \cite{Mistlbacher_1991}.

The class of associative posets is ample. It includes all meet-semilattices.
Another source of examples is given by the duals of face posets of a hyperplane
arrangement in $n$-dimensional Euclidean space \cite{brown-face-markov}, and
more generally, any convex subset of faces of such an arrangement (which
includes the case of face posets of convex polyhedra). These examples were
originally presented using \emph{left}-regular bands (“LRB”, satisfying $x\cdot
y\cdot x = x\cdot y$) and using the order defined by
\begin{equation*}\label{eq:lrb-quasiorder}
  a \leq b \iff a \cdot b = b.
\end{equation*}

Contrary to the case of commutative bands, for which the class of underlying
orders (semilattices) is first-order definable, this is not the case for
associative posets (Corollary~\ref{cor:AP-non-first-order}). The search for a
sensible (or structural) characterization of associative posets led us to the
question of definability of such classes.

In the present paper, we prove (Theorem~\ref{teo:arboles}) that the fact that
every tree with finite branches is associative is equivalent over $\ZF$ to the
Axiom of Choice, thereby showing that there is no canonical assignment of a band structure to each
associative poset.

This discussion of definability permeates our whole work, and many
questions remain open; some of these are gathered in Section~\ref{sec:conclusion}.
After setting up some preliminaries in the next section,
we restrict ourselves in Section~\ref{sec:normal-posets} to a subfamily
of associative posets for which a neat structural characterization is
available.

\section{Preliminaries}
\label{sec:preliminaries}

For any poset $(P,\leq)$, we  say that a binary operation
$\cdot$ on $P$ is \emph{admissible for} $(P,\leq)$ whenever $x\leq y$ if and only if
\begin{equation}
  \label{eq:order_of_RRB}
  x\cdot y=x.
\end{equation}
for all $x,y$ in $P$. In the case of an associative poset, we can moreover
choose this to be an RRB operation and then we use the phrase \emph{right posemigroup} when
referring to the expanded structure $(P,\leq, \cdot)$; this stems from the fact
that the product is order-preserving on the right only (see
Lemma~\ref{lem:lower-bounds}.\ref{item:monot-prod}).
Conversely,
given an RRB $(P,\cdot)$, we say that the partial order $x \leq y$
given by (\ref{eq:order_of_RRB}) is the \emph{underlying order} of
this RRB.

We start by introducing some useful elementary properties of right posemigroups.
\begin{lemma}\label{lem:aba-ba}
 In every right posemigroup,
  \begin{enumerate}
  \item \label{item:1} $a · b \leq b$; in particular, if $b$ is
    minimal, $a · b = b$.
  \item \label{item:aba-ba} $a · b · a = b · a$.
  \item \label{item:3} $a \leq b \implies b· a =a$.
  \end{enumerate}
\end{lemma}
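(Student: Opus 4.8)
The plan is to work directly from the defining equation of a right-regular band, namely $x\cdot y\cdot x = y\cdot x$, together with idempotency $x\cdot x = x$, and unpack what the order relation~\eqref{eq:order_of_RRB} says in each case. All three items should fall out of short equational manipulations, so I would present them in the order given, as the later parts may lean on the earlier ones.

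For item~\ref{item:1}, I want to show $a\cdot b \leq b$, which by~\eqref{eq:order_of_RRB} means I must verify $(a\cdot b)\cdot b = a\cdot b$. This is immediate from associativity and idempotency: $(a\cdot b)\cdot b = a\cdot(b\cdot b) = a\cdot b$. For the ``in particular'' clause, if $b$ is minimal then $a\cdot b\leq b$ forces $a\cdot b = b$, since nothing lies strictly below a minimal element. For item~\ref{item:aba-ba}, $a\cdot b\cdot a = b\cdot a$ is just a restatement of the RRB axiom $x\cdot y\cdot x = y\cdot x$ with $x := a$, $y := b$; there is nothing to prove beyond pointing this out, though it is worth recording separately because it is the form in which the identity gets used later (and it also shows the operation is ``the same on the right'' in the sense alluded to in the preliminaries).

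For item~\ref{item:3}, assume $a\leq b$, i.e.\ $a\cdot b = a$; I must deduce $b\cdot a = a$. Starting from $b\cdot a$, substitute $a = a\cdot b$ on the left occurrence of $a$: $b\cdot a = b\cdot(a\cdot b)\cdot\,$? — more carefully, $b\cdot a = b\cdot(a\cdot b)$ is wrong since that just gives $b\cdot a\cdot b$. The right move is to write $b\cdot a = (b\cdot a)$ and apply idempotency/the axiom to peel it down: using $a\cdot b = a$ we get $b\cdot a = b\cdot a\cdot(a\cdot b)$? Let me instead use item~\ref{item:aba-ba} with the roles swapped, $b\cdot a\cdot b = a\cdot b$, and combine with $a\cdot b = a$ to get $b\cdot a\cdot b = a$; then multiply suitably, or observe $b\cdot a = b\cdot(a\cdot b)\cdot a$? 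The clean argument: $b\cdot a \overset{a=a\cdot b}{=} b\cdot a\cdot b \overset{\text{item \ref{item:aba-ba}, }x=b,y=a}{=} a\cdot b \overset{a\leq b}{=} a$. So item~\ref{item:3} follows from item~\ref{item:aba-ba} plus the hypothesis.

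I do not expect any genuine obstacle here; this is a warm-up lemma and every step is a one-line equational computation. The only point requiring a moment's care is getting the substitutions in item~\ref{item:3} in the right order so that one actually lands on $a$ rather than circling back to $b\cdot a$; chaining through item~\ref{item:aba-ba} as above resolves that cleanly.
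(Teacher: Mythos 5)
Your proposal is correct, and for items~\ref{item:1} and~\ref{item:3} it is essentially the paper's own argument: the paper writes item~\ref{item:3} as $a = a\cdot a = (a\cdot b)\cdot a = b\cdot a$, which is the mirror image of your chain $b\cdot a = b\cdot a\cdot b = a\cdot b = a$, and both rest on item~\ref{item:aba-ba} together with the hypothesis $a\cdot b=a$. The genuine divergence is in item~\ref{item:aba-ba}. You treat it as a literal instance of the RRB axiom $x\cdot y\cdot x = y\cdot x$, which is legitimate under the paper's definition of a right posemigroup (the admissible operation is \emph{chosen} to be an RRB operation). The paper instead derives the identity without ever invoking that axiom: item~\ref{item:1} gives $a\cdot(b\cdot a)\leq b\cdot a$, while $b\cdot a = b\cdot(a\cdot b\cdot a)\leq a\cdot b\cdot a$ (again by item~\ref{item:1}, after collapsing $b\cdot a\cdot b\cdot a$ to $b\cdot a$ by idempotency), and antisymmetry of $\leq$ forces equality. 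What the paper's route buys is the conceptual point stressed in the introduction: any band whose induced quasiorder $x\cdot y=x$ is antisymmetric automatically satisfies the right-regularity law, so the identity is a consequence of admissibility for a genuine partial order rather than an independent hypothesis. Your shortcut proves the stated lemma just as well but loses that extra content; everything else matches.
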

\begin{proof}
  \begin{enumerate}
  \item $(a · b ) · b = a · (b · b) = a · b$.
  \item From Item~\ref{item:1} we know that
    $a ·(b ·a) \leq b ·a$ and $b·a = b ·(a ·b ·a)\leq a ·b ·a$.  By
    antisymmetry we obtain $a ·b ·a = b ·a$.
  \item $a = a · a = (a · b) · a = b ·a$, by
    Item~\ref{item:aba-ba}. \qedhere
  \end{enumerate}
\end{proof}

\begin{definition} 
Let $P$ be a poset and $D$ a subset of $P$. We say that $D$ is a \emph{decreasing subset} of $P$ if $\forall a, b\in P, a\in D \et b\leq a \Rightarrow b \in D$.
\end{definition}

\begin{definition}
Let $A$ be a right posemigroup and $B$ a subset of $A$. We say that $B$ is a substructure of $A$ if $B$ is closed under the semigroup operation. 
\end{definition}

\begin{corollary}\label{cor:decr-subalg}
  Every decreasing subset of a right posemigroup is closed under the semigroup operation, and hence it is the universe of a substructure.
\end{corollary}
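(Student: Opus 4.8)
The plan is to unwind the definitions and apply the first item of Lemma~\ref{lem:aba-ba}. Let $(P,\leq,\cdot)$ be a right posemigroup and let $D\subseteq P$ be a decreasing subset. To show $D$ is closed under $\cdot$, I would take arbitrary $a,b\in D$ and argue that $a\cdot b\in D$.

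The key step is the observation that $a\cdot b\leq b$, which is exactly Lemma~\ref{lem:aba-ba}.\ref{item:1}. Since $b\in D$ and $D$ is decreasing, the defining condition of a decreasing subset (with the roles $a\in D$, $b\leq a$ in the definition instantiated as $b\in D$, $a\cdot b\leq b$) yields $a\cdot b\in D$. As $a,b$ were arbitrary elements of $D$, this shows $D$ is closed under the semigroup operation.

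Finally, by the definition of substructure (a subset closed under the operation), $D$ is the universe of a substructure of $P$, completing the argument. I do not anticipate any real obstacle here: the only content is invoking Lemma~\ref{lem:aba-ba}.\ref{item:1}; the rest is a direct reading of the definitions of \emph{decreasing subset} and \emph{substructure}.
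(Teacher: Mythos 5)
Your argument is correct and is exactly the intended one: the paper states this as an immediate corollary of Lemma~\ref{lem:aba-ba}(\ref{item:1}) without writing out the details, and your invocation of $a\cdot b\leq b$ together with the definition of a decreasing subset is precisely that reasoning.
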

  
\begin{lemma}\label{lem:lower-bounds}
  For every right posemigroup, 
  \begin{enumerate}
  \item \label{item:monot-prod}$a\leq b$ implies $ a · x \leq b · x$.
  \item \label{item:iguales-inf} $c\leq x,y $ implies $c\leq x·y,
    y·x$. Hence if $x·y= y·x$, it must be  the infimum of $\{x,y \}$.
  \end{enumerate}
\end{lemma}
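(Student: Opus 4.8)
The plan is to verify the two items of Lemma~\ref{lem:lower-bounds} by direct computation, using associativity, idempotency, the defining equation (\ref{eq:order_of_RRB}), the RRB identity $x\cdot y\cdot x = y\cdot x$, and the already-established Lemma~\ref{lem:aba-ba}. Both parts reduce to showing that a certain product equals its left factor, which by (\ref{eq:order_of_RRB}) is exactly the assertion of an inequality.

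For item~\ref{item:monot-prod}, assume $a \leq b$, i.e.\ $a\cdot b = a$. I want $(a\cdot x)\cdot(b\cdot x) = a\cdot x$. Compute $(a\cdot x)\cdot(b\cdot x) = a\cdot(x\cdot b\cdot x)$ and apply Lemma~\ref{lem:aba-ba}.\ref{item:aba-ba} (in the form $x\cdot b\cdot x = b\cdot x$) to get $a\cdot b\cdot x = (a\cdot b)\cdot x = a\cdot x$, using $a\cdot b = a$. Hence $a\cdot x \leq b\cdot x$.

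For item~\ref{item:iguales-inf}, assume $c\leq x$ and $c\leq y$, that is $c\cdot x = c$ and $c\cdot y = c$. To show $c\leq x\cdot y$ I compute $c\cdot(x\cdot y) = (c\cdot x)\cdot y = c\cdot y = c$, so $c\leq x\cdot y$; symmetrically $c\leq y\cdot x$. For the final clause, suppose $x\cdot y = y\cdot x$; by Lemma~\ref{lem:aba-ba}.\ref{item:1} this common value is $\leq y$ and $\leq x$, so it is a lower bound of $\{x,y\}$, and by the first part every lower bound $c$ of $\{x,y\}$ satisfies $c\leq x\cdot y$, whence $x\cdot y$ is the infimum.

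None of this is a serious obstacle; the only care needed is to invoke the right instance of Lemma~\ref{lem:aba-ba}.\ref{item:aba-ba} in item~\ref{item:monot-prod} — noting that it gives $x\cdot b\cdot x = b\cdot x$ for all elements, which is precisely what collapses the middle of $a\cdot x\cdot b\cdot x$.
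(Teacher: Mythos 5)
Your proof is correct and follows essentially the same route as the paper's: item~\ref{item:monot-prod} is the computation $(a\cdot x)\cdot(b\cdot x)=a\cdot(x\cdot b\cdot x)=(a\cdot b)\cdot x=a\cdot x$ via Lemma~\ref{lem:aba-ba}.\ref{item:aba-ba}, and item~\ref{item:iguales-inf} is the computation $c\cdot(x\cdot y)=(c\cdot x)\cdot y=c$. Your explicit justification of the final "infimum" clause (combining Lemma~\ref{lem:aba-ba}.\ref{item:1} with the first part of item~\ref{item:iguales-inf}) is exactly the intended, if unstated, argument.
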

\begin{proof}
  \begin{enumerate}
  \item Using Lemma~\ref{lem:aba-ba}(\ref{item:aba-ba}),
    $a·\underline{x·b·x} = \underline{a·b}·x = a ·x$.
  \item $c= \underline{c}·x = c· y·x$. Hence $c\leq
    y·x$. Symmetrically, $c\leq x·y$. \qedhere
  \end{enumerate}
\end{proof}
In the following, we use $x{\downarrow}$ to denote $\{ a\in P \mid a\leq x \}$.
\begin{lemma}\label{lema:isomorfos}
  Let $P$ be a right posemigroup. If $x\cdot y=y$ then $(y\cdot
  x){\downarrow}$ is isomorphic to $y{\downarrow}$.
\end{lemma}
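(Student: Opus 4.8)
The plan is to exhibit an explicit order isomorphism between $(y\cdot x){\downarrow}$ and $y{\downarrow}$ and show it is a bijection. The natural candidates are the maps induced by multiplication: define $f : y{\downarrow} \to (y\cdot x){\downarrow}$ by $f(a) = a\cdot x$, and $g : (y\cdot x){\downarrow} \to y{\downarrow}$ by $g(b) = b\cdot y$. First I would check these are well-defined. If $a\leq y$ then, by Lemma~\ref{lem:lower-bounds}.\ref{item:monot-prod}, $a\cdot x \leq y\cdot x$, so $f(a)\in (y\cdot x){\downarrow}$. If $b\leq y\cdot x$, then again by monotonicity on the right $b\cdot y \leq y\cdot x\cdot y = x\cdot y = y$ (using Lemma~\ref{lem:aba-ba}.\ref{item:aba-ba} and the hypothesis $x\cdot y = y$), so $g(b)\in y{\downarrow}$. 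Both $f$ and $g$ are order-preserving by Lemma~\ref{lem:lower-bounds}.\ref{item:monot-prod}.

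Next I would verify that $f$ and $g$ are mutually inverse. For $a\leq y$ we have $g(f(a)) = a\cdot x\cdot y$; since $a\leq y$ gives $a\cdot x\cdot y = a\cdot x\cdot y$, I rewrite using $a\leq y \implies a = a\cdot y$ (from $a\cdot y = a$, which is the defining relation~\eqref{eq:order_of_RRB}), obtaining $a\cdot x\cdot y = a\cdot y\cdot x\cdot y = a\cdot y\cdot (x\cdot y) = a\cdot y\cdot y = a\cdot y = a$; more directly, $a\cdot x\cdot y$ and the RRB identity $x\cdot y\cdot x = y\cdot x$ together with $x\cdot y = y$ should collapse to $a$. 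For $b\leq y\cdot x$ we have $f(g(b)) = b\cdot y\cdot x$; since $b\leq y\cdot x$ means $b\cdot y\cdot x = b$ by~\eqref{eq:order_of_RRB}, this is immediate. The remaining direction $g(f(a)) = a$ is the one requiring the computation with $x\cdot y = y$, and that is where I would be most careful: the cleanest route is $a\cdot x\cdot y = a\cdot (x\cdot y) = a\cdot y = a$, using associativity, the hypothesis, and $a\leq y$.

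Since $f$ and $g$ are mutually inverse order-preserving maps, they are order isomorphisms, establishing $(y\cdot x){\downarrow} \cong y{\downarrow}$.

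I expect the only genuine subtlety to be bookkeeping: making sure each multiplication stays inside the correct down-set and that the hypothesis $x\cdot y = y$ is invoked exactly where needed (in the well-definedness of $g$ and in the identity $g\circ f = \mathrm{id}$), while the reverse composite $f\circ g = \mathrm{id}$ uses only~\eqref{eq:order_of_RRB}. No step should require more than associativity, the RRB identity, Lemma~\ref{lem:aba-ba}, and Lemma~\ref{lem:lower-bounds}.
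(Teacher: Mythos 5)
Your proof is correct and follows essentially the same route as the paper: the maps $a\mapsto a\cdot x$ and $b\mapsto b\cdot y$ are exactly the ones used there, and your computations $(b\cdot y)\cdot x=b\cdot(y\cdot x)=b$ and $(a\cdot x)\cdot y=a\cdot(x\cdot y)=a\cdot y=a$ are the paper's injectivity and surjectivity steps. The only (harmless) difference is that you conclude order-isomorphism by checking both maps are order-preserving via Lemma~\ref{lem:lower-bounds}.\ref{item:monot-prod}, whereas the paper instead verifies that the bijection preserves the semigroup operation, which yields the order isomorphism since the order is definable from the product.
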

\begin{proof}[Proof]
  We prove that the function $f:(y\cdot x){\downarrow}\to
  y{\downarrow}$ defined by $f(a)\defi a\cdot y$ is an isomorphism with inverse
  $b \mapsto b \cdot x$. The injectivity
  of $f$ follows from the fact that for all
  $a\leq y\cdot x$,  we have $a=a\cdot(y\cdot x)=(a\cdot y)\cdot
  x$. Now, if $b\leq y$, take $a=b\cdot x$. We know $a\leq y\cdot x$ by
  Lemma~\ref{lem:lower-bounds}(\ref{item:monot-prod}). Now, $f(a)=(b\cdot x)\cdot
  y=b\cdot (x\cdot y)=b\cdot y=b$. So $f$ is surjective and hence a bijection. To see that $f$ preserves the right posemigroup operation, take $a,b\in (y\cdot x){\downarrow}$. We have that $f(a\cdot b)= (a\cdot b) \cdot y = a \cdot (b\cdot y) = a\cdot (y\cdot b\cdot y) = (a\cdot y)\cdot (b\cdot y) =f(a)\cdot f(b)$. 
\end{proof}
\begin{corollary}\label{cor:xy-yx}
  Let $P$ be a right posemigroup. For every $x,y\in P$, $(x\cdot
  y){\downarrow}$ is isomorphic to $(y\cdot x){\downarrow}$.
\end{corollary}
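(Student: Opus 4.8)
The plan is to reduce Corollary~\ref{cor:xy-yx} to Lemma~\ref{lema:isomorfos} by passing to a suitable product element for which the hypothesis $x \cdot y = y$ holds. The key observation is that while $x \cdot y$ need not equal $y$ in general, the element $z \defi x \cdot y$ does satisfy a useful absorption identity: using idempotence and associativity, $x \cdot z = x \cdot (x \cdot y) = (x \cdot x) \cdot y = x \cdot y = z$. So $z = x \cdot y$ plays the role of ``$y$'' and $x$ plays the role of ``$x$'' in Lemma~\ref{lema:isomorfos}. Applying that lemma with this substitution gives that $(z \cdot x){\downarrow}$ is isomorphic to $z{\downarrow}$, i.e. that $\big((x\cdot y)\cdot x\big){\downarrow} \iso (x\cdot y){\downarrow}$.

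Next I would simplify the left-hand side. By Lemma~\ref{lem:aba-ba}(\ref{item:aba-ba}) we have $(x \cdot y)\cdot x = x \cdot y \cdot x = y \cdot x$. Substituting this into the isomorphism obtained in the previous step yields $(y\cdot x){\downarrow} \iso (x\cdot y){\downarrow}$, which is exactly the statement of the corollary (up to the harmless symmetry of ``isomorphic to'').

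There is essentially no obstacle here: the whole argument is a two-line application of the previously established lemma together with the basic identity $a\cdot b\cdot a = b\cdot a$. The only point requiring a modicum of care is verifying that the hypothesis of Lemma~\ref{lema:isomorfos} is genuinely met — that is, checking $x \cdot (x\cdot y) = x \cdot y$ from the band axioms — but this is immediate from idempotence. One should also note that Lemma~\ref{lema:isomorfos} is stated asymmetrically (assuming $x \cdot y = y$ and concluding about $(y\cdot x){\downarrow}$ and $y{\downarrow}$), so in writing up the corollary it is worth being explicit about which variable is being specialized, to avoid a spurious appearance of circularity.
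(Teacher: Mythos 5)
Your proof is correct and takes essentially the same route as the paper: both reduce the corollary to Lemma~\ref{lema:isomorfos} via an absorption identity and then simplify with $x\cdot y\cdot x = y\cdot x$. The only difference is the instantiation — you apply the lemma to the pair $(x,\,x\cdot y)$, verifying the hypothesis by idempotence alone, whereas the paper applies it to $(y\cdot x,\,x\cdot y)$ after computing $(y\cdot x)\cdot(x\cdot y)=x\cdot y$ — a cosmetic variation.
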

\begin{proof}
  We have $(x\cdot y)\cdot (y\cdot x)=x\cdot (y\cdot y)\cdot x=x\cdot
  y\cdot x=y\cdot x$. Symmetrically, $(y\cdot x)\cdot (x\cdot
  y)=x\cdot y$. By Lemma~\ref{lema:isomorfos} we have $(x\cdot
  y){\downarrow}\approx (y\cdot x){\downarrow}$.
\end{proof}
\begin{corollary}\label{cor:minimal}
  Let $P$ be a right posemigroup and $m\in P$ be minimal. For every
  $x\in P$, $m\cdot x\leq x$ is minimal.
\end{corollary}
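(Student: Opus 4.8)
The plan is to reduce the claim to Corollary~\ref{cor:xy-yx} together with Lemma~\ref{lem:aba-ba}(\ref{item:1}). First I would record that $m\cdot x\leq x$ is immediate from Lemma~\ref{lem:aba-ba}(\ref{item:1}), so only minimality of $m\cdot x$ remains to be shown.

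The key observation is that, since $m$ is minimal, the second clause of Lemma~\ref{lem:aba-ba}(\ref{item:1}) gives $x\cdot m=m$ (apply it with the roles $a:=x$, $b:=m$). Hence $(x\cdot m){\downarrow}=m{\downarrow}$, and because $m$ is minimal this down-set is the singleton $\{m\}$: indeed $a\le m$ forces $a=m$ precisely when $m$ is minimal. Now Corollary~\ref{cor:xy-yx} yields an isomorphism $(m\cdot x){\downarrow}\approx (x\cdot m){\downarrow}=\{m\}$, so $(m\cdot x){\downarrow}$ is itself a one-element poset. Its unique element must be $m\cdot x$, and therefore no element of $P$ lies strictly below $m\cdot x$; that is, $m\cdot x$ is minimal.

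There is no real obstacle here: the argument is a two-line consequence of the already-established isomorphism $(xy){\downarrow}\approx(yx){\downarrow}$, once one notices that multiplying a minimal element $m$ on the left collapses $x\cdot m$ to $m$. The only point that deserves a word of care is the translation between the property “$m$ is minimal’’ and the statement “$m{\downarrow}$ is a singleton’’, which holds because $P$ is a poset (antisymmetry), and the transfer of this singleton condition across the isomorphism of Corollary~\ref{cor:xy-yx}.
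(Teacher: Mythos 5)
Your argument is correct: $x\cdot m=m$ by Lemma~\ref{lem:aba-ba}(\ref{item:1}) since $m$ is minimal, so Corollary~\ref{cor:xy-yx} makes $(m\cdot x){\downarrow}$ isomorphic to the singleton $m{\downarrow}$, forcing $m\cdot x$ to be minimal, while $m\cdot x\leq x$ is again Lemma~\ref{lem:aba-ba}(\ref{item:1}). The paper leaves this corollary without an explicit proof, but your route via Corollary~\ref{cor:xy-yx} is exactly the one its placement intends.
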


\section{Normal posets}
\label{sec:normal-posets}

It is well known \cite{Ger1} that there are four proper subvarieties of RRBs,
each of which can be axiomatized by one extra identity besides the band
axioms. These are:
\begin{itemize}

 \item $x = y$. The trivial variety.

\item $x \cdot y = y \cdot x$. The class of partial orders underlying the
  subvariety of commutative bands is exactly the class of (meet-)semilattices.

\item $x \cdot y= y$. These are the “right-zero” bands, and the class of
  partial orders underlying this one is the class of antichains.

\item $x \cdot y \cdot z = y\cdot x \cdot z$. These are a superset of right-zero
  bands called “right-normal” (RNB). 
\end{itemize}

Our goal for this section is to give a characterization of \emph{normal posets},
which are the orders underlying right-normal bands. Normal posets are
always \emph{relative meet-semilattices} (i.e., posets in which every principal ideal is a
meet-semilattice) but not conversely (even assuming associativity; see
Example~\ref{exm:tulip}). In fact, it is not hard to show that an associative
relative meet-semilattice is a normal poset if and only if it admits an
operation which acts as the meet operation in every principal ideal \cite[Lemma~10]{pogroupoids}.

First, we require an auxiliary result. Recall that in every band $B$, Green's
equivalence $\mathfrak{R}$ is characterized by $x\cdot y = y \et y\cdot x =
x$. Also, the equivalence $\mathfrak{D}$ given by the composition of $\mathfrak{R}$ and $\mathfrak{L}$, is
the least congruence such that $B/\mathfrak{D}$ is a semilattice \cite{semilattice_cong}.
Since on every right regular band, Green’s equivalence $\mathfrak{L}$
is trivial, we have $\mathfrak{D}$ = $\mathfrak{R}$ on right regular bands.
From this, we can obtain the following Lemma:
\begin{lemma}
Let $B$ be an RRB. Then $\mathfrak{D}=\{(x,y)\in B^2: x\cdot y=y \et y\cdot
  x=x\}$ is the least congruence such that $B/\mathfrak{D}$ is a semilattice. 
\end{lemma}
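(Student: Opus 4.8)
The plan is to deduce the statement essentially as a restatement of the facts recalled just above it. The key observation is that we have two characterizations of $\mathfrak{D}$ in play: the abstract one, $\mathfrak{D} = \mathfrak{R} \circ \mathfrak{L}$, which is the least congruence whose quotient is a semilattice \cite{semilattice_cong}; and the concrete description of $\mathfrak{R}$ on a band as $\{(x,y) : x\cdot y = y \et y \cdot x = x\}$. So the only thing to check is that on an RRB, $\mathfrak{D}$ coincides with $\mathfrak{R}$, and that $\mathfrak{R}$ has the displayed form.

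First I would note that in any band, $\mathfrak{L} = (\lesssim)\cap(\gtrsim)$ with $\lesssim$ as in \eqref{eq:quasiorder}; by the very definition of right-regular bands recalled in the introduction, $\mathfrak{L}$ is the identity relation on an RRB (this is exactly the point that makes \eqref{eq:order_of_RRB} a partial order). Hence $\mathfrak{D} = \mathfrak{R}\circ\mathfrak{L} = \mathfrak{R}\circ \Delta = \mathfrak{R}$ as relations. Combining this with the standard description of $\mathfrak{R}$ on a band, namely $x\mathrel{\mathfrak{R}}y \iff x\cdot y = y \et y\cdot x = x$, gives $\mathfrak{D} = \{(x,y)\in B^2 : x\cdot y = y \et y\cdot x = x\}$.

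Finally, since $\mathfrak{D}$ is by definition the least congruence of $B$ such that $B/\mathfrak{D}$ is a semilattice \cite{semilattice_cong}, and we have just identified $\mathfrak{D}$ with the displayed relation, the lemma follows. I do not anticipate a genuine obstacle here: everything reduces to bookkeeping with Green's relations plus the defining property of RRBs. The one point that deserves an explicit word is why $\mathfrak{L}$ is trivial on an RRB — this is where the axiom $x\cdot y\cdot x = y\cdot x$ is used: if $x\lesssim y$ and $y\lesssim x$, i.e. $x\cdot y = x$ and $y\cdot x = y$, then $x = x\cdot y = x\cdot y\cdot x \cdot y = \dots$, and one quickly forces $x = y$ (indeed $y\cdot x = y$ and $x\cdot y\cdot x = y\cdot x$ give $x = x\cdot y = (x\cdot y)\cdot x\cdot y$, and using idempotency and the RRB identity collapses this to $x=y$). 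Once that is said, the rest is immediate from the cited results.
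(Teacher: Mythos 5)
Your proposal is correct and follows exactly the route the paper takes: the lemma is presented there as an immediate consequence of the preceding remarks that $\mathfrak{D}=\mathfrak{R}\circ\mathfrak{L}$ is the least congruence with semilattice quotient, that $\mathfrak{R}$ on a band is $\{(x,y): x\cdot y=y \et y\cdot x=x\}$, and that $\mathfrak{L}$ is trivial on an RRB. Your added verification that $\mathfrak{L}$ is trivial (cleanest form: $x=x\cdot x=(x\cdot y)\cdot x=y\cdot x=y$ using the RRB identity) is a welcome explicit step that the paper leaves to the reader.
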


In view of this result, for any RRB $B$, we shall call $\mathfrak{D}$ the \emph{semilattice congruence} of
$B$ and $B/\mathfrak{D}$ the \emph{quotient semilattice} of $B$.

\begin{theorem}~\label{th:normal}
  Let $P$ be a poset. The following are equivalent:
  \begin{enumerate}
  \item\label{item:normal}
    $P$ is normal;
  \item\label{item:charact-normal}
    There exist a meet-semilattice $S$ and an order homomorphism $f:P\rightarrow
    S$ which satisfies that $f_m := f|_{m{\downarrow}}:m{\downarrow}\rightarrow
    f(m){\downarrow}$ is an isomorphism between $m{\downarrow}$ and
    $f(m){\downarrow}$ for every $m\in P$.
  \end{enumerate}
\end{theorem}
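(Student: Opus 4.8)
The plan is to prove the two implications separately, using the semilattice congruence $\mathfrak{D}$ of an RNB as the bridge between the algebraic and order-theoretic pictures.

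\textbf{From \ref{item:normal} to \ref{item:charact-normal}.} Suppose $P$ is normal, so there is a right-normal band operation $\cdot$ admissible for $(P,\leq)$. I would take $S := P/\mathfrak{D}$, the quotient semilattice, and let $f : P \to S$ be the canonical projection. Since $\mathfrak{D}$ is a congruence and $P/\mathfrak{D}$ is a semilattice, $S$ is a meet-semilattice, and $f$ is an order homomorphism because the order on $S$ is the one induced by the band structure (using that the projection of an RRB onto its semilattice quotient carries $x \leq y$ to $f(x) \leq f(y)$, which follows directly from $x \cdot y = x \Rightarrow f(x) \cdot f(y) = f(x)$). The real content is showing that $f_m : m{\downarrow} \to f(m){\downarrow}$ is an order isomorphism for each $m$. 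Surjectivity: given $[z] \leq [m]$ in $S$, I would check that $z \cdot m \leq m$ and $f(z\cdot m) = [z]\cdot[m] = [z]$; here the hypothesis $a\in m{\downarrow}$ is witnessed by Lemma~\ref{lem:aba-ba}.\ref{item:1}. Injectivity is where right-normality is essential: if $a, b \leq m$ and $a \mathrel{\mathfrak{D}} b$, then $a \cdot b = b$ and $b \cdot a = a$; using $b \leq m$ (so $m \cdot b = b$ by Lemma~\ref{lem:aba-ba}.\ref{item:3}) and the identity $x\cdot y\cdot z = y\cdot x\cdot z$, one computes $a = b\cdot a = m\cdot b\cdot a = b\cdot m\cdot a = b\cdot a = \dots$ — more carefully, I would use $a = m\cdot a$ and $b = m \cdot b$ together with $x\cdot y\cdot x = y\cdot x$ and normality to force $a = b$. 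Finally, that $f_m^{-1}$ is order-preserving follows because $f_m$ is a bijective order homomorphism between principal ideals and one checks the comparison $f(a)\leq f(b) \Rightarrow a\leq b$ directly for $a,b\leq m$ (again from normality: $f(a)\leq f(b)$ means $[a][b]=[a]$, i.e. $a\cdot b \mathrel{\mathfrak{D}} a$, and with $a,b\leq m$ this upgrades to $a\cdot b = a$).

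\textbf{From \ref{item:charact-normal} to \ref{item:normal}.} Given $S$ and $f$ as in the statement, I would define an operation on $P$ by
\[
  x \cdot y \defi f_{y{\downarrow}}^{-1}\bigl( f(x) \wedge f(y) \bigr),
\]
which makes sense because $f(x)\wedge f(y) \leq f(y)$ lies in $f(y){\downarrow}$, the codomain of the isomorphism $f_y$. First I would check admissibility: $x\cdot y = x$ iff $f(x)\wedge f(y) = f_y(x)$, and since $f_y$ restricted to $y{\downarrow}$ is an isomorphism onto $f(y){\downarrow}$ and $f(x)\wedge f(y)\leq f(y)$, this happens iff $x\leq y$ and $f(x) = f(x)\wedge f(y)$, i.e. iff $x\leq y$ (the second condition being automatic once $x\leq y$ since $f$ is order-preserving). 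Then I would verify that $\cdot$ is idempotent, associative, and right-normal. For idempotence, $x\cdot x = f_x^{-1}(f(x)\wedge f(x)) = f_x^{-1}(f(x)) = x$. For the equational laws, the key observation is that $f(x\cdot y) = f(x)\wedge f(y)$ (apply $f$ to the definition, using that $f\circ f_y^{-1}$ is the inclusion $f(y){\downarrow}\hookrightarrow S$), so $f$ is a semilattice homomorphism from $(P,\cdot)$ onto a subalgebra of $S$; hence any band identity that holds in all semilattices and is "detected by $f$" transfers. Since right-normal bands are exactly those in which $f(u) = f(v)$ and $u,v$ lying below a common element forces $u = v$ — more precisely, I would argue that $x\cdot y\cdot z$ and $y\cdot x\cdot z$ both lie in $z{\downarrow}$ and have the same image $f(x)\wedge f(y)\wedge f(z)$ under the isomorphism $f_z$, hence are equal; similarly for associativity, both $x\cdot(y\cdot z)$ and $(x\cdot y)\cdot z$ lie in $z{\downarrow}$ with image $f(x)\wedge f(y)\wedge f(z)$, so they coincide.

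\textbf{Main obstacle.} I expect the delicate point to be the injectivity of $f_m$ in the forward direction — i.e., extracting from the bare right-normal identity the fact that elements of a principal ideal are separated by $f$. Everything else is a matter of pushing computations through the isomorphisms $f_m$, but this step is where the specific choice of axiom $x\cdot y\cdot z = y\cdot x\cdot z$ (as opposed to merely associativity) does the work, and getting the chain of substitutions right — juggling $m\cdot a = a$, $a\cdot m\cdot a = m\cdot a$, and the normal law — is the crux. A secondary subtlety in the converse direction is confirming that the $\cdot$ defined via $f_y^{-1}$ is well-defined and that $f$ is genuinely a homomorphism onto a \emph{sub-semilattice} of $S$ (it need not be onto $S$), which is all that is needed since subalgebras of semilattices are semilattices and right-normal bands form a variety.
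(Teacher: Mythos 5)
Your forward direction follows the paper's proof: both quotient by the semilattice congruence $\mathfrak{D}$ and check that the canonical projection restricts to a bijection on each principal ideal. Your converse, however, is genuinely different. The paper realizes $P$ as the quotient by $\ker h$ of a decreasing subset $B$ of $S\times A$ (with $A$ the right-zero band on the underlying set of $P$), where $h(\lb x,m\rb)=(f_m)^{-1}(x)$; membership in the RNB variety is then automatic, and the work goes into showing that $\ker h$ is a congruence and that the induced map is an order isomorphism. You instead define the operation directly on $P$ by $x\cdot y\defi (f_y)^{-1}\bigl(f(x)\wedge f(y)\bigr)$ --- which is exactly the operation the paper's construction transports to $P$ --- and verify the identities by hand. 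That verification is sound: the key facts are that $f(x\cdot y)=f(x)\wedge f(y)$ (so every word in $x,y,z$ has $f$-image $f(x)\wedge f(y)\wedge f(z)$), that both bracketings of a product ending in $z$ land in $z{\downarrow}$ (because $y\cdot z\leq z$ gives $(y\cdot z){\downarrow}\subseteq z{\downarrow}$, so $f_{y\cdot z}$ is a restriction of $f_z$), and that $f$ is injective on $z{\downarrow}$. This trades the paper's congruence-on-a-substructure-of-a-product bookkeeping for three short direct computations; both routes are valid, and yours is more transparent about where the operation comes from, at the cost of verifying associativity and right-normality explicitly rather than inheriting them from the variety.

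One computation needs repair. In the injectivity step of the forward direction, the chain you wrote, $a=b\cdot a=m\cdot b\cdot a=b\cdot m\cdot a=b\cdot a$, is circular (it returns to where it started), and the ingredients you name afterwards ($a=m\cdot a$, $b=m\cdot b$) are the wrong ones: what is needed is multiplication by $m$ on the \emph{right}, where admissibility gives $a\cdot m=a$ and $b\cdot m=b$ for $a,b\leq m$. The correct one-line computation (the paper's) is
\[
  a=a\cdot m=b\cdot a\cdot m=a\cdot b\cdot m=b\cdot m=b,
\]
using $a=b\cdot a$ and $a\cdot b=b$ from $a\mathrel{\mathfrak{D}}b$, and the right-normal law in the middle step. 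With that line inserted, your proof is complete.
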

It is immediate that in \ref{item:charact-normal}, it is enough to verify the
condition for each $m$ in some cofinal $M\subseteq P$.
\begin{proof}
  ($\ent$) Fix an admissible RNB operation $\cdot$ for
  $P$.  Let $\mathfrak{D}$ be the semilattice congruence of $(P,\cdot)$ and
  $f:P\rightarrow P/\mathfrak{D}$ the canonical projection. First note that since each
  initial segment $p{\downarrow}$ is decreasing, it is also a substructure, so
  the restriction of any homomorphism to it is a homomorphism. We also have that
  RRB isomorphisms are isomorphisms of the underlying orders. Therefore we only need to check that
  $f|_{p{\downarrow}} = f_p$ is bijective for every $p \in P$. Let $p\in P$; since
  $(P,\cdot)$ is an RNB, if $x,y\leq p$ and $f(x)=f(y)$, then
  \[
    x=x\cdot p=y\cdot x\cdot p=x\cdot y\cdot p=y\cdot p=y
  \]
  since $x\mathrel{\mathfrak{D}} y$ and hence $f_p$ is injective. Also, if $x/\mathfrak{D}\leq
  p/\mathfrak{D}$, then $x\cdot p/\mathfrak{D}= x/\mathfrak{D}$, so $f_p(x\cdot p)=x/\mathfrak{D}$. Thus
  $f_p$ is an order isomorphism for every $p\in P$.

  ($\tne$) Assume that $S$
  and $f:P\rightarrow S$ satisfy \ref{item:charact-normal}. Consider the
  antichain order $A\defi(P,=)$ on $P$. Let $B\subseteq 
  S\times A$ given by 
  \[
    B:=\{\lb x,m\rb \mid x\leq f(m) \}=\{\lb f(m),m\rb \mid m\in
    P\}{\downarrow}.
  \]
  $B$ is a substructure of the direct product RNB structure on $S\times A$. Let
  $h:B\rightarrow P$ given by $h(\lb x,m\rb)=(f_m)^{-1}(x)$. Note that $h$ is
  surjective.
  \begin{claim}\label{claim:ker-is-congruence}
    $\delta \defi \ker h = \{ \lb x,y\rb \mid h(x) = h(y) \}$ is a congruence over $B$.
  \end{claim}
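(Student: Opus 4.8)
The plan is to check directly that $\delta=\ker h$ is an equivalence relation — which is automatic — that is compatible with the operation $B$ inherits from the product RNB $S\times A$. Recall that the admissible operation on the meet-semilattice $S$ is the meet and the one on the antichain $A=(P,=)$ is the right-zero product $a\cdot b=b$, so the operation on $S\times A$ is $\lb s,a\rb\cdot\lb t,b\rb=\lb s\wedge t,\,b\rb$, and $B$ is closed under it by Corollary~\ref{cor:decr-subalg}, being decreasing. Accordingly, I would fix elements $\lb x,m\rb,\lb x',m'\rb,\lb y,n\rb,\lb y',n'\rb\in B$ and set $p\defi h(\lb x,m\rb)=h(\lb x',m'\rb)$ and $q\defi h(\lb y,n\rb)=h(\lb y',n'\rb)$; since $x\wedge y\le y\le f(n)$ and likewise for the primed data, the pairs $\lb x\wedge y,n\rb$ and $\lb x'\wedge y',n'\rb$ lie in $B$, and the whole task is to prove $h(\lb x\wedge y,n\rb)=h(\lb x'\wedge y',n'\rb)$.

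I would first record two elementary facts about $h$. Since $h(\lb z,k\rb)=(f_k)^{-1}(z)$ and $f_k=f|_{k{\downarrow}}$, applying $f$ to the element $h(\lb z,k\rb)\in k{\downarrow}$ gives $f\bigl(h(\lb z,k\rb)\bigr)=z$; in particular $x=f(p)=x'$ and $y=f(q)=y'$. Secondly, $(f_k)^{-1}\colon f(k){\downarrow}\to k{\downarrow}$ is an order isomorphism, hence monotone, so from $x\wedge y\le y$ we obtain $h(\lb x\wedge y,n\rb)=(f_n)^{-1}(x\wedge y)\le (f_n)^{-1}(y)=h(\lb y,n\rb)=q$, and symmetrically $h(\lb x'\wedge y',n'\rb)\le h(\lb y',n'\rb)=q$.

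Thus both $h(\lb x\wedge y,n\rb)$ and $h(\lb x'\wedge y',n'\rb)$ lie in $q{\downarrow}$, and applying $f$ together with the first fact gives $f\bigl(h(\lb x\wedge y,n\rb)\bigr)=x\wedge y=f(p)\wedge f(q)=x'\wedge y'=f\bigl(h(\lb x'\wedge y',n'\rb)\bigr)$. Since $f_q=f|_{q{\downarrow}}$ is injective, the two elements coincide, which is exactly the compatibility of $\delta$ with the operation; hence $\delta$ is a congruence. (Surjectivity of $h$ plays no role here; it was already noted above.)

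The only genuinely delicate point is the step where $h(\lb x\wedge y,n\rb)$ and $h(\lb x'\wedge y',n'\rb)$, produced a priori inside the unrelated principal ideals $n{\downarrow}$ and $n'{\downarrow}$, are both shown to lie below $q$. This is what allows their equality to be tested inside the single ideal $q{\downarrow}$, on which $f$ is injective by hypothesis~\ref{item:charact-normal}; since $f$ need not be injective on all of $P$, no coarser comparison would suffice, so locating the two candidates below a common element is the heart of the argument.
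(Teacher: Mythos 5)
Your proof is correct and follows essentially the same route as the paper's: first derive $x=x'$ and $y=y'$ by applying $f$ to the equal preimages, then locate the two values of $h$ on the products inside a single principal ideal and invoke injectivity of $f$ there. The only (immaterial) difference is that you perform the final comparison inside $q{\downarrow}$ with $q=h(\lb y,n\rb)$, whereas the paper performs it inside $n{\downarrow}$ after noting $(f_{n'})^{-1}(x\wedge y)\leq(f_{n'})^{-1}(y)=(f_n)^{-1}(y)\leq n$.
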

  Indeed, if $\lb x,m \rb \dltr \lb x',m'\rb$ and $\lb y,n\rb\dltr \lb y',n'\rb$, then
  $(f_m)^{-1}(x)=(f_{m'})^{-1}(x')$, and so
  $x=f((f_m)^{-1}(x))=f((f_{m'})^{-1}(x'))=x'$. Analogously, using
  that $(f_n)^{-1}(y)=(f_{n'})^{-1}(y')$ we see that $y=y'$.  Now we
  have
  \[
    h(\lb x,m\rb\cdot \lb y,n \rb)=h(\lb x\wedge y,n\rb)=(f_n)^{-1}(x\wedge y)=:p,
  \]
  while
  \[
    h(\lb x',m'\rb\cdot \lb y',n'\rb)=(f_{n'})^{-1}(x'\wedge
    y')=(f_{n'})^{-1}(x\wedge y)=:q.
  \]
  As $q\leq
  (f_{n'})^{-1}(y)=(f_n)^{-1}(y)\leq n$, and $f(q)=x\wedge y=f(p)$, we
  must have $p=q$ as both $p$ and $q$ belong to $n{\downarrow}$.
  \begin{claim}
    $\phi:B/\ker h\rightarrow P$ given by $\phi([\lb x,m\rb])=h(\lb x,m\rb )$ is an order isomorphism.
  \end{claim}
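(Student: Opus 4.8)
The plan is to establish that $\phi$ is a well-defined bijection that both preserves and reflects the order; normality of $P$ then follows by transporting the quotient RNB structure. Well-definedness and injectivity of $\phi$ are immediate from the shape of $\delta=\ker h$: since $\delta$ relates exactly the pairs with the same $h$-value, $\phi([\lb x,m\rb])\defi h(\lb x,m\rb)$ is independent of the chosen representative and separates distinct classes, while Claim~\ref{claim:ker-is-congruence} ensures $B/\delta$ is a bona fide quotient RNB. Surjectivity of $\phi$ is just the surjectivity of $h$ already observed ($h(\lb f(p),p\rb)=p$).

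For order preservation I would first unwind the quotient order: $[u]\leq[v]$ in $B/\delta$ means $[u]\cdot[v]=[u]$, and since in $S\times A$ — with the meet operation on the semilattice $S$ and the right-zero operation on the antichain $A=(P,=)$ — we have $\lb x,m\rb\cdot\lb y,n\rb=\lb x\wedge y,n\rb$, the inequality $[\lb x,m\rb]\leq[\lb y,n\rb]$ amounts to $(f_n)^{-1}(x\wedge y)=(f_m)^{-1}(x)$. Setting $p\defi(f_m)^{-1}(x)$ and $q\defi(f_n)^{-1}(y)$, this hypothesis gives $p=(f_n)^{-1}(x\wedge y)$, so $p,q\in n{\downarrow}$ and $f(p)=x\wedge y\leq y=f(q)$; as $f_n$ is an isomorphism onto $f(n){\downarrow}$ it reflects $\leq$, whence $p\leq q$, i.e.\ $h(\lb x,m\rb)\leq h(\lb y,n\rb)$. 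For the reverse (order reflection), starting from $p\defi h(\lb x,m\rb)\leq h(\lb y,n\rb)\defi q$, I would use that $f$ is order-preserving to get $x=f(p)\leq f(q)=y$, hence $x\wedge y=x$; and since $p\leq q\leq n$, the element $p$ lies in $n{\downarrow}$ with $f_n(p)=x$, so $(f_n)^{-1}(x\wedge y)=(f_n)^{-1}(x)=p=(f_m)^{-1}(x)$, which is precisely the condition for $[\lb x,m\rb]\leq[\lb y,n\rb]$.

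This shows $\phi$ is an order isomorphism. To close the proof of the theorem, I would note that $B/\delta$ is an RNB — a quotient of the substructure $B$ of the product of the RNBs $S$ and $A$ — and that the underlying order of the RNB $B/\delta$ is exactly the quotient order just described; transporting its operation along $\phi$ therefore equips $P$ with an admissible RNB operation, so $P$ is normal and the implication $(\tne)$ is complete.

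I do not expect any serious obstacle here: the argument is essentially a bookkeeping exercise with the explicit product $\lb x,m\rb\cdot\lb y,n\rb=\lb x\wedge y,n\rb$ and the definition of the quotient order. The one point that actually carries content — and must not be skipped — is that each $f_m$ is an order \emph{isomorphism} onto $f(m){\downarrow}$, not merely an order-preserving surjection: this is what makes it legitimate to transfer inequalities both from $S$ back to $P$ (in the preservation step) and to recognise $p$ as $(f_n)^{-1}(x)$ (in the reflection step).
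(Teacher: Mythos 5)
Your proof is correct and follows essentially the same route as the paper: bijectivity from the definition of $\ker h$ and the surjectivity of $h$, order preservation by unwinding $[\lb x,m\rb]\cdot[\lb y,n\rb]=[\lb x\wedge y,n\rb]$ and using that each $f_n$ is an order isomorphism, and order reflection by verifying the product condition on representatives. The only cosmetic difference is that the paper checks reflection on the canonical representatives $\lb f(p),p\rb$ while you work with arbitrary ones; both computations are the same in substance.
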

 The map $\phi$ is clearly bijective so we only have to check that it is
  order preserving and that is has an order preserving inverse. Given $[\lb x,m\rb],[\lb y,m'\rb]\in B/\ker h$ such that
  $[\lb x,m\rb]\cdot[\lb y,m'\rb]=[\lb x,m\rb]$, we have $[\lb x,m\rb]=[\lb x\wedge y,m'\rb]$,
  so
  \[
    \phi([\lb x,m\rb])=\phi [\lb x\wedge y, m'\rb]=(f_{m'})^{-1}(x\wedge y)\leq
    (f_{m'})^{-1}(y)=\phi ([\lb y,m'\rb]).
  \]
  We also have that $\phi^{-1}(p)=[\lb f(p),p\rb]$. If $p\leq q$, then
  \[
    [\lb f(p),p\rb]\cdot [\lb f(q),q\rb]=[\lb f(p),q\rb]=[\lb f(p),p\rb]=\phi^{-1}(p)
  \]
  Then $\phi$ is a poset
  isomorphism. Therefore, the RNB structure of
  $B/\ker h$ is admissible for $P$.
\end{proof}

\begin{example}\label{ex:norm}
Applying the previous theorem we can see that the poset depicted in Figure~\ref{fig:normal} is normal.

\begin{figure}[h]
\begin{center}

\begin{tikzpicture}
   [>=latex, thick,
          % label distance=0ex,
          nodo/.style={thick,minimum size=0cm,inner sep=0cm}]
       
        \node (7) at (1.75,1) [color=black!100, fill=black!0, nodo]  {$\bullet$};
         \node (8) at (2.25,1) [color=black!100, fill=black!0, nodo]  {$\bullet$};
        \node (9) at (1.75,.5) [color=black!100, fill=black!0, nodo] {$\bullet$};
        \node (10) at (2.25,.5) [color=black!100, fill=black!0, nodo] {$\bullet$};
        \node (11) at (2,0) [color=black!100, fill=black!0, nodo] {$\bullet$};

        \draw [-] (9) edge  (7);
        \draw [-] (10) edge  (7);
        \draw [-] (9) edge  (8);
        \draw [-] (10) edge  (8);
        \draw [-] (11) edge  (9);
        \draw [-] (10) edge  (11);
        
\end{tikzpicture}

\caption{A normal poset}\label{fig:normal}
\end{center}
\end{figure}
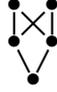
\begin{proof}
Consider the equivalence relation over the previous poset in which the only non-trivial pair consists of the two maximal elements of the poset. The quotient of this relation is a meet-semilattice and the canonical projection satisfies the conditions of the previous theorem.
\end{proof}
\end{example}

\begin{example} \label{ex:mult_norm}
  For the leftmost poset from Figure~\ref{fig:multiple-RNB-struct}, different
  homomorphisms can be chosen. Each one gives rise to a different compatible
  right-normal band operation.
  \begin{figure}[h!]
    \begin{center}
      \begin{tabular}{lllc|ccccc|crrr}
        \begin{tikzpicture}
          [>=latex, thick,
            % label distance=0ex,
            nodo/.style={thick,minimum size=0cm,inner sep=0cm}]
          
          \node (7) at (1.75,1) [color=red!100, fill=red!0, nodo]  {$\bullet$};
          \node (8) at (2.25,1) [color=blue!100, fill=black!0, nodo]  {$\star$};
          \node (9) at (1.75,.5)  [color=cyan!100, fill=green!0, nodo] {$\vartriangle$};
          \node (10) at (2.25,.5) [color=magenta!100, fill=cyan!0, nodo] {$\circ$};
          \node (11) at (2,0)  [color=black!100, fill=blue!0, nodo] {$\diamond$};

          \draw [-] (9) edge  (7);
          \draw [-] (10) edge  (8);
          \draw [-] (11) edge  (9);
          \draw [-] (10) edge  (11);
          
        \end{tikzpicture} & 
        \raisebox{3ex}{$\Longrightarrow$} & \begin{tikzpicture}
          [>=latex, thick,
            % label distance=0ex,
            nodo/.style={thick,minimum size=0cm,inner sep=0cm}]
          
          \node (7) at (1.75,1) [color=red!100, fill=red!0, nodo]  {$\bullet$};
          \node (8) at (2.25,1) [color=blue!100, fill=black!0, nodo]  {$\star$};
          \node (9) at (1.75,.5) [color=cyan!100, fill=green!0, nodo] {$\vartriangle$};
          \node (10) at (2.25,.5) [color=magenta!100, fill=cyan!0, nodo] {$\circ$};
          \node (11) at (2,0) [color=black!100, fill=blue!0, nodo] {$\diamond$};

          \draw [-] (9) edge  (7);
          \draw [-] (10) edge  (8);
          \draw [-] (11) edge  (9);
          \draw [-] (10) edge  (11);
          
        \end{tikzpicture} & & &
        \begin{tikzpicture}
          [>=latex, thick,
            % label distance=0ex,
            nodo/.style={thick,minimum size=0cm,inner sep=0cm}]
          
          \node (7) at (1.75,1) [color=red!100, fill=red!0, nodo]  {$\bullet$};
          \node (8) at (2.25,1) [color=blue!100, fill=black!0, nodo]  {$\star$};
          \node (9) at (1.75,.5) [color=magenta!100, fill=green!0, nodo] {$\circ$};
          \node (10) at (2.25,.5) [color=magenta!100, fill=cyan!0, nodo] {$\circ$};
          \node (11) at (2,0) [color=black!100, fill=blue!0, nodo] {$\diamond$};

          \draw [-] (9) edge  (7);
          \draw [-] (10) edge  (8);
          \draw [-] (11) edge  (9);
          \draw [-] (10) edge  (11);
          
        \end{tikzpicture}&  \raisebox{3ex}{$\Longrightarrow$}  &\begin{tikzpicture}
          [>=latex, thick,
            % label distance=0ex,
            nodo/.style={thick,minimum size=0cm,inner sep=0cm}]
          \node (7) at (1.75,1) [color=red!100, fill=red!0, nodo]  {$\bullet$};
          \node (8) at (2.25,1) [color=blue!100, fill=black!0, nodo]  {$\star$};
          \node (9) at (2,.5) [color=magenta!100, fill=green!0, nodo] {$\circ$};
          \node (11) at (2,0) [color=black!100, fill=blue!0, nodo] {$\diamond$};

          \draw [-] (9) edge  (7);
          \draw [-] (9) edge  (8);
          \draw [-] (11) edge  (9);
        \end{tikzpicture} & & &
        \begin{tikzpicture}
          [>=latex, thick,
            % label distance=0ex,
            nodo/.style={thick,minimum size=0cm,inner sep=0cm}]
          
          \node (7) at (1.75,1) [color=red!100, fill=red!0, nodo]  {$\bullet$};
          \node (8) at (2.25,1) [color=red!100, fill=black!0, nodo]  {$\bullet$};
          \node (9) at (1.75,.5) [color=black!100, fill=green!0, nodo] {$\circ$};
          \node (10) at (2.25,.5) [color=black!100, fill=cyan!0, nodo] {$\circ$};
          \node (11) at (2,0) [color=blue!100, fill=blue!0, nodo] {$\diamond$};

          \draw [-] (9) edge  (7);
          \draw [-] (10) edge  (8);
          \draw [-] (11) edge  (9);
          \draw [-] (10) edge  (11);
          
        \end{tikzpicture} &  \raisebox{3ex}{$\Longrightarrow$} &  \begin{tikzpicture}
          [>=latex, thick,
            % label distance=0ex,
            nodo/.style={thick,minimum size=0cm,inner sep=0cm}]
          \node (7) at (2,1) [color=red!100, fill=red!0, nodo]  {$\bullet$};
          
          \node (9) at (2,.5) [color=black!100, fill=green!0, nodo] {$\circ$};
          \node (11) at (2,0) [color=blue!100, fill=blue!0, nodo] {$\diamond$};

          \draw [-] (9) edge  (7);

          \draw [-] (11) edge  (9);
        \end{tikzpicture} 

      \end{tabular}
    \end{center}
    \caption{Multiple RNB structures over a normal poset. We define three different equivalence relations. Two elements are related if and only if their nodes are of the same color and shape. One can easily check that the quotient and the canonical projection satisfy the hypotheses of the previous theorem in each case. }\label{fig:multiple-RNB-struct}
  \end{figure}

\end{example}
Note that the quotient semilattice of the RNB obtained in the last part of the
proof is isomorphic to the semilattice $S$ with which we started. Also the
canonical projection of the semilattice congruence is the homomorphism $f$. It
is not hard to see that in the case of right-normal bands, one can completely determine the
product using two equationally definable binary relations: the
underlying partial order and the semilattice congruence. This tells us,
according to Beth's Theorem, that there is a first order formula $\phi(x,y,z)$
in the language $\{ \leq, \mathfrak{D} \}$ which is equivalent to the formula $x\cdot
y=z$. In fact, Theorem~\ref{th:normal} provides information for defining it. We
simply need a formula which roughly says ``$z$ is the representative of the
class $x/\mathfrak{D} \wedge y/\mathfrak{D}$ which is below $y$''. For this, we define $\psi(x,y)$ which states ``$x/\mathfrak{D} \leq y/\mathfrak{D}$'', as $\psi(x,y) := \exists c,\; c\leq y \et x \mathrel{\mathfrak{D}} c$. Then we set
\[
  \phi(x,y,z) := z\leq y \et \psi(z,x) \et \forall d,\; (\psi(d,x)\et
  \psi(d,y))\rightarrow \psi(d,z).
\]
 
\begin{example}
The non-normal poset in Figure~\ref{fig:non-normal} admits two non isomorphic right-regular band operations. The semilattice congruence is the same in both right-regular bands.

\begin{figure}[h]
\begin{center}
\begin{tikzpicture}
   [>=latex, thick,
          % label distance=0ex,
          nodo/.style={thick,minimum size=0cm,inner sep=0cm}]

         \node (6) at (-1,0) [color=black!100, fill=black!0, nodo] {$\bullet$};
        \node (7) at (2,1) [color=black!100, fill=black!0, nodo] {$\bullet$};
         \node (8) at (1,0) [color=black!100, fill=black!0, nodo] {$\bullet$};
        \node (9) at (3,0) [color=black!100, fill=black!0, nodo] {$\bullet$};
        \node (10) at (3,-1) [color=black!100, fill=black!0, nodo]  {$\bullet$};

        \draw [-] (8) edge  (7);
        \draw [-] (10) edge  (9);
        \draw [-] (9) edge  (7);        
\end{tikzpicture}
\caption{An associative poset with a single semilattice congruence.}\label{fig:non-normal}
\end{center}
\end{figure}
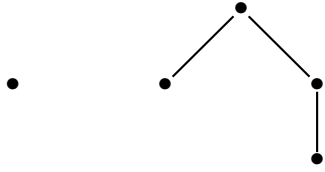
It's easy to see that the previous poset is not a normal poset as it is not even a relative meet-semilattice. Associativity stems from Remark~\ref{obs:foresta}.

\end{example}

%%% Local Variables: 
%%% mode: latex
%%% TeX-master: "associative_posets"
%%% End:

\section{Constructions}
\label{sec:constructions}

In this section we discuss closure under certain model-theoretic operations of
the class of associative posets. These are obviously correlated to algebraic
constructions on the RRB side.

Given two posets $(P,\leq^P)$ and $(Q,\leq^Q)$ with $P$ disjoint from
$Q$, we define their \emph{disjoint union} $\bigl(P\sqcup
Q,({\leq^P})\sqcup({\leq^Q})\bigr)$, in which the copies of both posets are
unrelated.
We extend this concept in the obvious way
to define the disjoint union of a family of posets, $\bigsqcup_{i\in
  I}{P}_i$. We also define their \emph{ordered sum} $P+Q$ as the poset
\[
  \bigl(P\sqcup Q,({\leq^P})\sqcup({\leq^Q})\sqcup (P\times Q)\bigr),
\]
and analogously for a family of posets, $\sum_{i\in I} P_i$, for any linearly
ordered index set $I$.

\begin{lemma}\label{lem:sum-ord}
  Let $\{P_i : i\in I\}$ be a family of associative posets. Then
    $\sum_{i\in I}P_i$ is associative.
\end{lemma}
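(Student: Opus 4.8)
The plan is to build an admissible RRB operation on the ordered sum $\sum_{i\in I}P_i$ directly from admissible RRB operations $\cdot_i$ on each $P_i$ (which exist by hypothesis), using the linear order on $I$. Since in the ordered sum every element of $P_i$ lies strictly below every element of $P_j$ whenever $i<j$, the natural guess is: for $x\in P_i$ and $y\in P_j$, set $x\cdot y := x\cdot_i y$ if $i=j$; set $x\cdot y := x$ if $i<j$ (so that $x\le y$ is witnessed by $x\cdot y=x$, as required); and set $x\cdot y := y$ if $i>j$ (a "right-zero" choice on the cross terms going downward, forced because we need $x\cdot y\le y$ by Lemma~\ref{lem:aba-ba}.\ref{item:1} and $y$ is strictly below $x$, so $x\cdot y$ must be some element of $P_j$ below $y$; taking $y$ itself is the clean uniform choice). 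I would first write this operation out and record the three cases.

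Next I would verify that $\cdot$ is admissible for the ordered sum, i.e. that $x\cdot y=x$ holds exactly when $x\le y$. If $x,y$ lie in the same $P_i$ this is admissibility of $\cdot_i$. If $x\in P_i$, $y\in P_j$ with $i<j$, then $x\cdot y=x$ and indeed $x\le y$ in the sum. If $i>j$, then $x\cdot y=y\ne x$ and indeed $x\not\le y$. So admissibility is immediate from the case analysis; the real work is associativity and right-regularity.

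For associativity, I would check $(x\cdot y)\cdot z = x\cdot(y\cdot z)$ by cases on the relative positions of the indices $i,j,k$ of $x,y,z$. When all three indices coincide, it is associativity of $\cdot_i$. In every mixed case the operation collapses to one of the two projections "take the left argument" or "take the right argument" on the relevant pair, and a short bookkeeping argument shows both sides equal the same one of $x,y,z$: for instance if $i<j$ then $x\cdot y=x$, and whatever $y\cdot z$ is (it lies in $P_j$ or $P_k$ with index $\ge j>i$), $x\cdot(y\cdot z)=x=(x\cdot y)\cdot z$; if $i>j,k$ the products all return the rightmost-indexed argument; the genuinely interesting sub-cases are the ones with exactly two indices equal, where one step is handled by $\cdot_i$ and the other by a projection, and one checks these are compatible. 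For right-regularity $x\cdot y\cdot x = y\cdot x$, the same case split works: if $x,y$ share an index it is the RRB law for $\cdot_i$; if $x$'s index is the larger, both sides reduce to $x$; if $x$'s index is the smaller, both sides reduce to $y\cdot x = y$ and $x\cdot y\cdot x = x\cdot y = x$... so I would be careful here and note that when $\operatorname{ind}(x)<\operatorname{ind}(y)$ we have $y\cdot x=y$ (right-zero, downward) while $x\cdot y\cdot x=(x\cdot y)\cdot x=x\cdot x$, and $x\cdot x=x$ by idempotency of $\cdot_i$ — so I should double-check the downward convention makes $y\cdot x = y$ and $x \cdot y \cdot x = y\cdot x$ actually agree, which forces revisiting whether the cross-term convention should instead be $x\cdot y=y$ when $\operatorname{ind}(x)>\operatorname{ind}(y)$ consistently; I expect the correct uniform rule is precisely "on cross terms, the product equals the argument of larger index", and that this is exactly what makes right-regularity go through.

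The main obstacle I anticipate is getting the cross-term convention exactly right so that all of admissibility, associativity, and $x\cdot y\cdot x=y\cdot x$ hold simultaneously; once the convention "cross products return the higher-indexed argument" is pinned down, each verification is a finite case analysis on the ordering of the indices with no computation beyond invoking idempotency, associativity, and right-regularity of the individual $\cdot_i$, together with linearity of $I$.
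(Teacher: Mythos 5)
Your operation is the right one and is essentially the paper's: the paper defines $x\cdot y=\min\{x,y\}$ whenever $x$ and $y$ are comparable and $x\cdot_i y$ when both lie in $P_i$, which on cross terms is exactly your ``return the argument of \emph{smaller} index'' rule (the paper then reduces the general associativity check to the finite sum $P_i+P_j+P_k$, viewed as a substructure, whereas you verify the cases on three indices directly; both are fine). However, your closing paragraph talks you out of the correct convention and into a wrong one. The confusion starts when you assert that $\operatorname{ind}(x)<\operatorname{ind}(y)$ gives $y\cdot x=y$: under your own definition, $y\cdot x$ has left index $j>i$, so the product is the \emph{right} argument, i.e.\ $y\cdot x=x$ (consistent with Lemma~\ref{lem:aba-ba}.\ref{item:3}, since $x\leq y$ forces $y\cdot x=x$). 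With that corrected, right-regularity goes through: $x\cdot y\cdot x=(x\cdot y)\cdot x=x\cdot x=x=y\cdot x$. Your proposed ``fix'' --- cross products return the \emph{larger}-indexed argument --- would destroy admissibility outright: for $i<j$ we have $x\leq y$ in the sum, so we need $x\cdot y=x$, not $y$. So you should strike the last few sentences and keep the original rule, which is just ``cross products return the minimum.''

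One smaller slip in the associativity sketch: when $i<j$ you claim $y\cdot z$ always has index $\geq j>i$ and hence $x\cdot(y\cdot z)=x$. This fails when $k<i<j$: there $y\cdot z=z\in P_k$ and $x\cdot(y\cdot z)=x\cdot z=z$, while also $(x\cdot y)\cdot z=x\cdot z=z$, so the identity still holds but both sides equal $z$, not $x$. The case analysis is sound once carried out honestly; just do not assume the product climbs upward in the index order.
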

\begin{proof}
  First, we prove that the sum of two associative posets $P$ and $Q$ is
  associative.  We define a right posemigroup operation over $P+Q$ in the
  following manner:
  \[
    x\cdot y=\begin{cases} \min\{x,y\} & x,y \text{ comparable} \\
    x\cdot^Q y & \text{$x,y\in Q$}\\ x\cdot^P y
    & \text{$x,y\in P$}\\
    \end{cases}
  \]
  A straightforward case analysis shows  associativity.
  By induction, a
  sum of a finite family of associative posets is associative.
  
  Now let $\{P_i: i\in I \}$ be a family of associative posets and let
  $\cdot_i$ denote an admissible RRB structure over $P_i$. Consider
  the following product over  $\sum_{i\in I}P_i$:
  \[
    x\cdot y=
    \begin{cases}
      \min\{x,y\} & \text{$x$ and $y$ comparable} \\
      x\cdot_i y & \text{$x,y\in P_i$}
    \end{cases}
  \]
  Let $x,y,z$ be arbitrary elements in
  $\bigsqcup_{i\in I}P_i$ and  $i,j,k\in I$ be such that $x\in P_i$,
  $y\in P_j$, $z\in P_k$. Now we have, by associativity of
  $P_i+P_j+P_k$ and the fact that this finite sum is a substructure of
  $\sum_{i\in I}P_i$, that $(x\cdot y)\cdot z=x\cdot (y\cdot z)$.
\end{proof}

\begin{lemma}\label{lem:union-disj}
Let $P$ be an associative (resp., normal) poset, $I$ a set and $P_i$ an isomorphic
copy of $P$ for each $i\in I$. Then $\bigsqcup_{i\in I} P_i$ is
associative (normal).
\end{lemma}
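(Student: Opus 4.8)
The plan is to reduce the disjoint union to the ordered sum, which we have already handled in Lemma~\ref{lem:sum-ord}, by inserting a new bottom element. More precisely, let $\mathbf{1}$ denote the one-element poset; then $\bigsqcup_{i\in I} P_i$ embeds as a decreasing subset into the ordered sum $\mathbf{1} + \bigsqcup_{i\in I} P_i$, but that sum is not of the form $\sum_{i} Q_i$ with each $Q_i$ associative unless we are slightly more careful. The cleaner route: observe that $\mathbf{1} + P$ is associative whenever $P$ is (it is the ordered sum of two associative posets, by Lemma~\ref{lem:sum-ord}), and then form $\sum_{i \in I}(\mathbf{1} + P_i)$ over a linearly ordered copy of $I$; but this adds too many points. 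So instead I would work directly.

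First I would fix, for each $i \in I$, an admissible RRB operation $\cdot_i$ on $P_i$ (using associativity of $P$ and transporting along the isomorphism $P \cong P_i$, or invoking the axiom of choice to pick one per index — this is harmless since the paper works over $\ZFC$ in these sections). Then I would define on the universe $\bigsqcup_{i\in I} P_i$ the operation
\[
  x \cdot y = \begin{cases} x \cdot_i y & x, y \in P_i \text{ for some } i, \\ x & x \in P_i,\ y \in P_j,\ i \neq j.\end{cases}
\]
I would then verify that $\cdot$ is admissible, i.e.\ that $x \cdot y = x \iff x \leq y$: within a single $P_i$ this is the admissibility of $\cdot_i$; across distinct blocks $x \cdot y = x$ always holds while $x \leq y$ never does, so admissibility forces us to remember that in $\bigsqcup_{i\in I}P_i$ elements of different blocks are \emph{incomparable} — wait, that breaks admissibility. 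This is exactly the obstacle, and it is why one cannot use the naive ``pick the first coordinate'' rule: if $x \in P_i$, $y \in P_j$ with $i \neq j$, then $x \not\leq y$, so we must have $x \cdot y \neq x$, and symmetrically $x \cdot y \neq y$; there is simply no value in $\{x,y\}$ available. The resolution is that $\bigsqcup_{i\in I} P_i$ is associative \emph{as a poset} — we are free to choose \emph{any} admissible RRB operation, and it need not be block-preserving; indeed it cannot be.

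The right construction, then, mimics the right-zero band: recall that the antichain on a set admits the right-zero operation $x \star y = y$. So on $\bigsqcup_{i\in I} P_i$ I would define
\[
  x \cdot y = \begin{cases} x \cdot_i y & x, y \in P_i, \\ y & x \in P_i,\ y \in P_j,\ i \neq j,\end{cases}
\]
which across distinct blocks behaves like a right-zero band (an antichain), so $x \cdot y = x$ fails as required, giving admissibility. The only real work is associativity of $\cdot$: I would check $(x \cdot y) \cdot z = x \cdot (y \cdot z)$ by cases on how many of $x, y, z$ lie in a common block. The cases where all three share a block reduce to associativity of $\cdot_i$; the cases where $z$ is in a different block from at least one of $x, y$ collapse because the rightmost factor dominates — e.g.\ if $z \in P_k$ and $y \notin P_k$, then $y \cdot z = z$ and, checking the left side, $(x\cdot y)\cdot z$ equals $z$ as well provided $x \cdot y \notin P_k$, which one verifies from the definition (if $x, y \in P_k$ we'd need $x \cdot_k y \in P_k$, but then $y \in P_k$, contradiction; otherwise $x \cdot y \in \{x \cdot_i y\} \subseteq P_i$ or $x \cdot y = y \notin P_k$). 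The remaining sub-cases are a short finite check. For the normal case, I would replace ``admissible RRB operation'' by ``admissible RNB operation'' throughout and additionally verify the identity $x \cdot y \cdot z = y \cdot x \cdot z$ for the displayed operation; the cross-block cases again reduce to the right-zero identity (trivially right-normal) and the within-block cases to right-normality of each $\cdot_i$, so the same case analysis goes through. I expect the main obstacle to be purely bookkeeping: enumerating the block-membership patterns for associativity without missing a case, particularly the mixed patterns where exactly two of the three elements share a block.
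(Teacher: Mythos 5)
There is a genuine gap: the operation you finally settle on is not associative. The failing pattern is exactly one of the ``mixed'' sub-cases you defer to ``a short finite check'', namely $x,z\in P_k$ and $y\in P_j$ with $j\neq k$. There your rule gives $(x\cdot y)\cdot z=y\cdot z=z$, while $x\cdot(y\cdot z)=x\cdot z=x\cdot_k z$, and these agree only if $x\cdot_k z=z$ for all $x,z\in P_k$, i.e.\ only if each block is an antichain. Concretely, take $P$ the two-element chain $0<1$ and $I=\{1,2\}$, so $\cdot_i$ is the minimum on each copy; with $x=0_1$, $y=0_2$, $z=1_1$ you get $(x\cdot y)\cdot z=0_2\cdot 1_1=1_1$ but $x\cdot(y\cdot z)=0_1\cdot 1_1=0_1$. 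Your verification only ever inspects the left-associated product $(x\cdot y)\cdot z$ and the block membership of $x\cdot y$; it never confronts $x\cdot(y\cdot z)$ when $y\cdot z$ lands back in the block of $x$. The diagnosis in your first attempt was the right one to heed: the cross-block product must remember something about $x$, and discarding $x$ entirely (the pure right-zero rule) loses too much.

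The repair is the paper's one-line argument: take a single admissible RRB (resp.\ RNB) operation $\cdot$ on $P$ and form the direct product $(P,\cdot)\times(I,\pi_2)$, where $\pi_2$ is the right-zero operation on $I$. This is an RRB (resp.\ RNB) because these classes are varieties, hence closed under products, and its underlying order is precisely $\bigsqcup_{i\in I}P_i$, since $(p,i)\cdot(q,j)=(p\cdot q,j)$ equals $(p,i)$ iff $i=j$ and $p\leq q$. In effect the correct cross-block rule is not $x\cdot y=y$ but ``compute $x\cdot y$ in $P$ and place the result in the block of $y$''; your problematic case then reduces to associativity of $\cdot$ on $P$. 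This also disposes of your worry about choosing one operation per index: a single operation on $P$ suffices, and no per-block choices (beyond the given isomorphisms) are needed.
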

\begin{proof}
  Observe that $\bigsqcup_{i\in I} P_i$ is the underlying order of the
  right-regular (resp., normal) given by $(P,\cdot) \times (I,\pi_2)$, where $\cdot$ is an
  admissible right-regular (normal) band structure for $P$ and $\pi_2(x,y)=y$ for any $x,y\in
  I$.
\end{proof}
Similar arguments also yield:
\begin{lemma}\label{lem:sum-ord2}
  Let $\{Q_i:i\in I\}$ be a family of associative posets and let $P$
    an associative poset with top element $1$. Let us define
    $R_i=P_i+Q_i$, with $P_i$ an isomorphic copy of $P$, then
    $\bigsqcup_{i\in I} R_i$ is associative.
\end{lemma}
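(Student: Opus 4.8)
The plan is to reduce Lemma~\ref{lem:sum-ord2} to the two constructions we already have in hand, namely the ordered sum (Lemma~\ref{lem:sum-ord}) and the disjoint union (Lemma~\ref{lem:union-disj}), by arguing that the family $\{R_i : i \in I\}$ is, up to isomorphism, a disjoint union of ordered sums in which the ``bottom pieces'' $P_i$ are all copies of a \emph{fixed} associative poset $P$ with top element $1$. First I would fix an admissible RRB operation $\cdot^P$ on $P$ and an admissible RRB operation $\cdot_i^{Q}$ on each $Q_i$; by Lemma~\ref{lem:sum-ord} each ordered sum $R_i = P_i + Q_i$ is associative, and I would record a \emph{specific} admissible operation $\cdot_i$ on $R_i$ obtained from the recipe in the proof of that lemma (minimum on comparable pairs, $\cdot^P$ inside $P_i$, $\cdot_i^Q$ inside $Q_i$), transported along the chosen isomorphism $P \cong P_i$.

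Next I would invoke Lemma~\ref{lem:union-disj}, or rather re-run its one-line argument in this slightly more general setting: the disjoint union $\bigsqcup_{i\in I} R_i$ is the underlying order of the product-like band obtained by gluing the $(R_i,\cdot_i)$, and the point is simply that on a disjoint union the order relation never holds between distinct blocks, so the candidate operation
\[
  x * y =
  \begin{cases}
    \min\{x,y\} & \text{$x$ and $y$ comparable}\\
    x\cdot_i y & \text{$x,y \in R_i$}
  \end{cases}
\]
is well defined, is admissible for $\bigsqcup_{i\in I} R_i$ (comparability forces membership in the same block), and inherits idempotency and right-regularity block by block. Associativity of $*$ then follows exactly as in the last paragraph of the proof of Lemma~\ref{lem:sum-ord}: given $x,y,z$ lying in blocks $R_i,R_j,R_k$, they all sit inside the finite sub-poset that one gets by restricting to those (at most three) blocks, and since each block $R_h = P_h + Q_h$ is an ordered sum of associative posets, that finite configuration is itself associative by the already-established finite case; as it is a substructure of $\bigsqcup_{i\in I} R_i$, the identity $(x*y)*z = x*(y*z)$ holds there and hence everywhere.

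The one place that needs a word of care, and which I regard as the main (mild) obstacle, is making precise in what sense the restriction to finitely many blocks $R_i \sqcup R_j \sqcup R_k$ is a \emph{substructure} of $\bigsqcup_{i\in I} R_i$ carrying an associative structure: one must check that the operation $*$ restricted to this union coincides with the operation built the same way from those three summands alone, which is immediate from the definition since $*$ is defined blockwise and ``$\min$'' is absolute, and that a disjoint union of three associative posets is associative — but that is just the $|I|=3$ instance of Lemma~\ref{lem:union-disj} applied to $R_i, R_j, R_k$ (which need not be isomorphic, so strictly speaking one uses here the disjoint-union argument rather than Lemma~\ref{lem:union-disj} as stated; this is harmless, since disjoint union of \emph{finitely many, arbitrary} associative posets follows by the same blockwise recipe, the finite sum case of Lemma~\ref{lem:sum-ord} furnishing associativity of each block). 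Once this bookkeeping is in place the proof is complete, and I would phrase it as: ``Let $\cdot^P$ be admissible for $P$ and $\cdot_i^Q$ admissible for $Q_i$; by Lemma~\ref{lem:sum-ord} the blockwise minimum-or-$\cdot$ operation is an admissible RRB operation on each $R_i$, and gluing these over $I$ by taking minima across comparable elements gives, by the substructure argument of Lemma~\ref{lem:sum-ord}, an admissible RRB operation on $\bigsqcup_{i\in I} R_i$.''
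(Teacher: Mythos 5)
There is a genuine gap, and it is fatal as written: the operation
\[
  x * y =
  \begin{cases}
    \min\{x,y\} & \text{$x$ and $y$ comparable}\\
    x\cdot_i y & \text{$x,y \in R_i$}
  \end{cases}
\]
is not total on $\bigsqcup_{i\in I} R_i$. If $x\in R_i$ and $y\in R_j$ with $i\neq j$, then $x$ and $y$ are incomparable \emph{and} lie in different blocks, so neither clause applies. (In Lemma~\ref{lem:sum-ord} the analogous case analysis is exhaustive precisely because in an \emph{ordered sum} elements of distinct summands are always comparable; in a disjoint union they never are.) Defining the cross-block products is the entire content of the lemma, and it is exactly here that the hypotheses you never use --- that the bottom parts $P_i$ are copies of one fixed $P$ and that $P$ has a top element $1$ --- must enter: for instance, for $x\in R_i$ and $y\in R_j$ with $i\neq j$ one sets $x\cdot y \defi \bar{x}\cdot_j y$, where $\bar{x}\in P_j$ is the copy of $x$ if $x\in P_i$ and the copy of $1$ if $x\in Q_i$, and one then verifies admissibility, right-regularity and associativity of the resulting total operation; this is the ``similar argument'' combining the product-with-a-right-zero-band idea behind Lemma~\ref{lem:union-disj} with the ordered-sum operation of Lemma~\ref{lem:sum-ord}.

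The surrounding reductions are also unsound. You assert that a disjoint union of finitely many \emph{arbitrary} associative posets is associative ``by the same blockwise recipe''; this follows neither from Lemma~\ref{lem:union-disj} (which requires all blocks to be isomorphic copies of one poset) nor from Lemma~\ref{lem:sum-ord}, and if your argument were correct it would apply verbatim to show that $\R\sqcup\Q$ is associative --- which the paper shows is false (proof of Theorem~\ref{th:normal-posets-not-first-order}, via Lemma~\ref{lema:isomorfos}), even though $\R$ and $\Q$ are each associative. Indeed, whether every disjoint union of (even finite) associative posets is associative is posed as an open question in Section~\ref{sec:conclusion}. So the proof cannot be repaired by bookkeeping alone: one must actually construct the cross-block products, and any such construction must make essential use of the shared $P$ and its top element.
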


\begin{theorem}\label{th:normal-posets-not-first-order}
  The class of normal posets is not axiomatizable by first-order
  sentences.
\end{theorem}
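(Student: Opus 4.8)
The strategy is the standard compactness/ultraproduct technique for showing non-axiomatizability: exhibit a family of normal posets whose ultraproduct (or a limit of elementary-chain type) fails to be normal. Since the class of normal posets sits inside the class of relative meet-semilattices, and since Theorem~\ref{th:normal} gives a usable combinatorial criterion for normality, a natural route is to build, for each $n$, a finite normal poset $P_n$ that is ``barely'' normal, in the sense that witnessing the homomorphism $f$ of Theorem~\ref{th:normal}(\ref{item:charact-normal}) requires identifying two elements that are at distance roughly $n$ in the order; then in a non-principal ultrapower the corresponding two elements sit at infinite distance, no finite first-order sentence can force their identification, and normality breaks.

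\textbf{Key steps.}
First I would fix, for each $n\ge 1$, a concrete finite poset $P_n$: take two long ``ladders'' or chains that share a common bottom segment and then split, arranged so that $P_n$ is a relative meet-semilattice, and so that the \emph{only} meet-semilattice quotient $f\colon P_n\to S$ for which each $f_m$ is an isomorphism onto $f(m){\downarrow}$ must collapse a pair of comparable-at-the-top elements living over a chain of length $n$. (The poset of Example~\ref{ex:norm} and those in Figure~\ref{fig:multiple-RNB-struct} are the ``$n=1$'' prototypes; one glues $n$ copies of the basic gadget in a chain.) Second, verify directly that each $P_n$ \emph{is} normal, by displaying the required $f$ and appealing to Theorem~\ref{th:normal} — this is a routine finite check. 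Third, form the ultraproduct $P^* \defi \prod_n P_n / U$ for a non-principal ultrafilter $U$ on $\omega$; by {\L}o\'s's theorem $P^*$ is a poset, and it contains the ``diagonal'' pair of elements $a^*, b^*$ whose collapse in any witnessing quotient is forced in each factor, but which in $P^*$ are separated by an infinite descending chain with no infimum-structure to anchor a common image. Fourth, show $P^*$ is \emph{not} normal: suppose toward a contradiction that $g\colon P^*\to S^*$ witnesses normality; using that each $g_m$ is an isomorphism onto $g(m){\downarrow}$, derive that $g(a^*)=g(b^*)$ would have to be realized by an element of $P^*$ below both — but the construction of $P_n$ ensures no such element exists in the limit (the relevant ``would-be meet'' recedes to infinity). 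Conclude that the class of normal posets is not closed under ultraproducts, hence (being elementary-class-closed under isomorphism and, as a would-be elementary class, under ultraproducts) cannot be first-order axiomatizable. Alternatively, and perhaps more cleanly, one can invoke the contrapositive of the {\L}o\'s--Tarski/Keisler--Shelah characterization: if normal posets were first-order definable, the class would be closed under ultraproducts, contradicting the previous point; it is worth remarking that, separately, Corollary~\ref{cor:AP-non-first-order} for associative posets may follow by a similar or compatible construction.

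\textbf{Main obstacle.}
The delicate point is the design of $P_n$ so that normality is genuinely ``fragile'' along the sequence: I must ensure simultaneously that (i) each $P_n$ really does admit a witnessing homomorphism (so it lies in the class), and (ii) the witness is rigid enough that passing to the ultraproduct destroys it — i.e. that the element forced to serve as the common image under any normality witness is, in $P^*$, provably absent. Concretely the risk is that $P^*$ happens to be normal for some unexpected reason (some other, non-diagonal, meet-semilattice quotient works), so the heart of the argument is a careful analysis — leveraging the ``initial segments are substructures'' observation and the isomorphism clause of Theorem~\ref{th:normal} — showing that \emph{any} witnessing $g$ on $P^*$ would force the existence of an infimum-like element that the $P_n$ were built to exclude in the limit. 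A secondary but manageable subtlety is checking that the relevant failure of normality is detectable at the level of the structure $(P^*,\leq)$ alone, not merely of some chosen expansion; this is exactly what Theorem~\ref{th:normal} buys us, since it phrases normality purely order-theoretically.
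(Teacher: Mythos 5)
There is a genuine gap: your argument is built entirely around a family of finite posets $P_n$ that you never actually construct, and whose two essential properties (each $P_n$ is normal, and the ultraproduct $\prod_n P_n/U$ is not) you never verify. You flag the design of $P_n$ as ``the delicate point,'' but that point is the entire mathematical content of the theorem; everything else in the proposal (\L o\'s, closure of elementary classes under ultraproducts) is boilerplate. Moreover, there is a structural reason to doubt the route is salvageable as stated. A class closed under isomorphism is elementary iff it is closed under ultraproducts \emph{and} under elementary equivalence, and these can fail independently; your plan needs the first closure to fail, whereas nothing in the paper (or in your sketch) shows that it does. The natural candidates all backfire: for instance, if $P_n$ is the disjoint union of two $n$-element chains, the nonprincipal ultraproduct is again a disjoint union of two isomorphic chains (comparability sorts the elements into exactly two mutually isomorphic components), hence normal by Lemma~\ref{lem:union-disj}. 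Your own closing caveat --- ``the risk is that $P^*$ happens to be normal for some unexpected reason'' --- is therefore not a secondary subtlety but the place where the whole approach may collapse.

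The paper avoids this by attacking the \emph{other} closure property. It takes $P=\R\sqcup\R$, which is normal by Lemma~\ref{lem:union-disj}, shows by an Ehrenfeucht--Fra\"iss\'e game that $\R\sqcup\Q$ is elementarily equivalent to $P$, and then observes via Lemma~\ref{lema:isomorfos} (in any admissible RRB structure, suitable principal downsets in the two components would have to be isomorphic) that $\R\sqcup\Q$ is not even associative, since no principal downset of $\Q$ is order-isomorphic to one of $\R$. Thus the class fails closure under elementary equivalence, which immediately rules out first-order axiomatizability without any need to control ultraproducts. If you want to keep an ultraproduct-flavored argument, you would at minimum have to exhibit the $P_n$ explicitly and prove, using the full strength of Theorem~\ref{th:normal}, that \emph{no} witnessing homomorphism exists on the limit --- none of which is present here.
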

\begin{proof}
  Consider the disjoint union $P=\R\sqcup\R$, which is normal by
  Lemma \ref{lem:union-disj}. It can be shown by using Ehrenfeucht–Fraïssé
  games that $\R\sqcup \Q$ is elementarily equivalent to $P$. But by
  Lemma \ref{lema:isomorfos} this poset is even not associative. Therefore,
  the class of normal posets is not closed under elementary
  equivalence and therefore is not a first-order class.
\end{proof}
  
\begin{corollary}\label{cor:AP-non-first-order}
  The class of associative posets is not axiomatizable by first-order
  sentences.\qed
\end{corollary}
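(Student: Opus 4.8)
The plan is straightforward: derive Corollary~\ref{cor:AP-non-first-order} directly from Theorem~\ref{th:normal-posets-not-first-order} and its proof, simply by observing that the same witnessing pair of posets already lives on the \emph{associative} side, not merely the normal side. Concretely, I would reuse $P = \R \sqcup \R$ and $P' = \R \sqcup \Q$. By Lemma~\ref{lem:union-disj}, $P$ is associative (being a disjoint union of copies of the associative poset $\R$, which is a chain and hence a meet-semilattice). The Ehrenfeucht--Fraïssé argument invoked in the proof of Theorem~\ref{th:normal-posets-not-first-order} shows $P \equiv P'$ in the signature $\{\leq\}$. Finally, Lemma~\ref{lema:isomorfos} shows $P'$ is not associative: if it carried an admissible RRB operation $\cdot$, pick $x$ in the $\R$-summand and $y$ in the $\Q$-summand; since they are incomparable we have $x \cdot y = y$ and $y \cdot x = x$, so $(y\cdot x){\downarrow} = x{\downarrow}$ would be isomorphic to $y{\downarrow}$, contradicting that $x{\downarrow} \cong (-\infty,x]_{\R}$ is a complete dense chain while $y{\downarrow} \cong (-\infty,y]_{\Q}$ is not.

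Thus the class of associative posets contains $P$ but not the elementarily equivalent $P'$, so it is not closed under elementary equivalence, and therefore not first-order axiomatizable. The only step requiring any care is making explicit the Ehrenfeucht--Fraïssé equivalence $\R \sqcup \R \equiv \R \sqcup \Q$: one plays the game on each pair of summands in parallel, using the classical fact that $(\R,\leq) \equiv (\Q,\leq)$ (both are countable-or-not dense linear orders without endpoints, and DLO is complete and decidable), and the standard composition lemma for EF games over disjoint unions (a winning strategy on each component assembles into a winning strategy on the union, since the disjoint-union relation adds no cross-comparabilities). Since this is precisely the argument already gestured at in the proof of Theorem~\ref{th:normal-posets-not-first-order}, and since in the excerpt the corollary is stated with a bare \qed, I expect the intended proof to be essentially the one-line remark that the same counterexample witnesses non-axiomatizability for the larger class; no genuine obstacle arises, as every ingredient is already in hand.

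Alternatively, and perhaps more cleanly, one can deduce the corollary abstractly: the class $\mathcal{N}$ of normal posets and the class $\mathcal{A}$ of associative posets satisfy $\mathcal{N} \subseteq \mathcal{A}$, and the separating pair $(P,P')$ from Theorem~\ref{th:normal-posets-not-first-order} has $P \in \mathcal{N} \subseteq \mathcal{A}$ while $P' \notin \mathcal{A}$ (this last point being \emph{exactly} what that proof establishes via Lemma~\ref{lema:isomorfos}, as it shows $P'$ fails to be associative, a fortiori fails to be normal). Hence the very same elementarily equivalent pair already demonstrates non-closure of $\mathcal{A}$ under $\equiv$. I would phrase the written proof in this form, as two or three sentences, citing Theorem~\ref{th:normal-posets-not-first-order} for the EF equivalence and Lemma~\ref{lem:union-disj} for associativity of $P$.
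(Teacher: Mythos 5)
Your proposal is essentially the paper's intended proof: the corollary is stated with a bare \qed{} precisely because the witnessing pair $\R\sqcup\R$ and $\R\sqcup\Q$ from Theorem~\ref{th:normal-posets-not-first-order} already shows non-closure of the class of associative posets under elementary equivalence, and your second, ``abstract'' formulation is exactly right. One caveat: your parenthetical justification that incomparability of $x$ and $y$ forces $x\cdot y=y$ and $y\cdot x=x$ is not a valid inference (the paper's foliated-tree example has incomparable $x,y$ with $x\cdot y<y$); the correct route is Corollary~\ref{cor:xy-yx}, which gives $(x\cdot y){\downarrow}\cong(y\cdot x){\downarrow}$ with $x\cdot y$ in the $\Q$-summand and $y\cdot x$ in the $\R$-summand, yielding the same contradiction without assuming $x\cdot y=y$.
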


\section{Examples}

Every poset $P$ admits a binary operation $\cdot$ in the sense of
(\ref{eq:order_of_RRB}). This operation can be chosen to be commutative if $P$
is not the two-element antichain. As it is well-known, meet-semilattices
are characterized by the fact that
$\cdot$ can be (uniquely) chosen to be commutative and associative; therefore
every meet-semilattice is immediately a normal poset and hence associative.

The following is the smallest example of a non-associative poset.
\begin{example}[The hummingbird]
  The poset in Figure~\ref{fig:picaflor} is not associative. Assume by
  way of contradiction that it admits an RRB structure $·$. By
  Lemmas~\ref{lem:lower-bounds}(\ref{item:iguales-inf}) and~\ref{lem:aba-ba}(\ref{item:1}), $b · x = x$
  and $x · b = b$; but this contradicts
  Corollary~\ref{cor:xy-yx}, since $x{\downarrow}$ and $b{\downarrow}$ are
  obviously not isomorphic.
  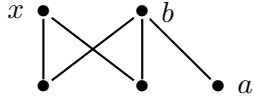
\begin{figure}[h]
    \begin{center}
      \begin{tikzpicture}
        [>=latex, thick,
          nodo/.style={thick,minimum size=0cm,inner sep=0cm}]
        
        \node (a) at (2.3,0) [label=right:$a$] [nodo] {$\bullet$};
        \node (x) at (0,1) [label=left:$x$]   [nodo] {$\bullet$};
        \node (0) at (0,0)  [nodo] {$\bullet$};
        \node (b) at (1.3,1)  [label=right:$b$]  [nodo] {$\bullet$};
        \node (1) at (1.3,0)  [nodo] {$\bullet$};
        
        \draw [-] (x) edge  (0);
        \draw [-] (x) edge  (1);
        \draw [-] (b) edge  (0);
        \draw [-] (b) edge  (1);
        \draw [-] (b) edge  (a);
      \end{tikzpicture}
      \caption{The hummingbird.} \label{fig:picaflor}
    \end{center}
  \end{figure}
\end{example}

\begin{example}[The 3-crown]
  The poset depicted in Figure~\ref{fig:crown} is also not associative, but for
  totally different reasons (see Appendix~\ref{sec:additional-proofs} for a
  proof). It is noteworthy that analogous $(2n)$-crowns of even width are all
  associative.
  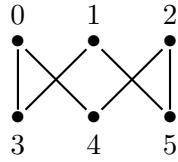
\begin{figure}[h]
    \begin{center}
      \begin{tikzpicture}
        [>=latex, thick,
          nodo/.style={thick,minimum size=0cm,inner sep=0cm}]
        
        \node (0) at (-1,1) [label=above:$0$] [nodo] {$\bullet$};
        \node (1) at (0,1) [label=above:$1$]   [nodo] {$\bullet$};
        \node (2) at (1,1)  [label=above:$2$]  [nodo] {$\bullet$};
        \node (3) at (-1,0)  [label=below:$3$]  [nodo] {$\bullet$};
        \node (4) at (0,0)  [label=below:$4$] [nodo] {$\bullet$};
        \node (5) at (1,0)  [label=below:$5$] [nodo] {$\bullet$};
        
        \draw [-] (0) edge  (3);
        \draw [-] (0) edge  (4);
        \draw [-] (1) edge  (3);
        \draw [-] (1) edge  (5);
        \draw [-] (2) edge  (4);
        \draw [-] (2) edge  (5);
      \end{tikzpicture}
      \caption{The crown poset.\label{fig:crown}} 
    \end{center}
  \end{figure}  
\end{example}

\begin{example}[The puppy]
The poset depicted in Figure~\ref{fig:puppy} is an example of an associative poset in which there is no RRB operation for which $x\cdot y = x \wedge y$ holds for every pair of elements $x, y$ such that $x\wedge y$ exists. In the only admissible RRB operation for this poset, $a\cdot b = b$ and $b\cdot a = a$.
\begin{figure} [h]
  \begin{center}
      \begin{tikzpicture}
        [>=latex, thick,
          nodo/.style={thick,minimum size=0cm,inner sep=0cm}]
        
        \node (a) at (0.5,0.5) [label=left:$a$] [nodo] {$\bullet$};
        \node (x) at (0,1)   [nodo] {$\bullet$};
        \node (0) at (0,0)  [nodo] {$\bullet$};
        \node (y) at (2,1)    [nodo] {$\bullet$};
        \node (1) at (2,0)  [nodo] {$\bullet$};
        \node (b) at (1.5,0.5) [label=right:$b$] [nodo] {$\bullet$};
        \node (c) at (1,0)[label=right:$c$] [nodo] {$\bullet$};
        
        \draw [-] (y) edge  (b);
        \draw [-] (x) edge  (0);
        \draw [-] (x) edge  (1);
        \draw [-] (y) edge  (0);
        \draw [-] (y) edge  (1);
        \draw [-] (x) edge  (a);
        \draw [-] (c) edge  (a);
        \draw [-] (c) edge  (b);
        
      \end{tikzpicture}
      \caption{The puppy.} \label{fig:puppy}
    \end{center}
  \end{figure}
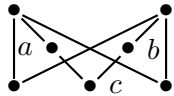
\end{example}

\begin{example}[The tulip]\label{exm:tulip}
  The poset depicted in Figure~\ref{fig:tulip} is an example of a non-normal
  associative relative meet-semilattice (every initial segment is a
  meet-semilattice).
  The situation is analogous to the previous example; we must have $a\cdot b =
  b$ and $b\cdot a = a$.
  \begin{figure} [h]
    \begin{center}
      \begin{tikzpicture}
        [>=latex, thick,
          nodo/.style={thick,minimum size=0cm,inner sep=0cm}]
        
        \node (x) at (-.5,2) [nodo] {$\bullet$};
        \node (y) at (.5,2)  [nodo] {$\bullet$};
        \node (z) at (1.5,2)  [nodo] {$\bullet$};
        \node (a) at (-.5,1)  [label=left:$a$]  [nodo] {$\bullet$};
        \node (b) at (1.5,1) [label=right:$b$] [nodo] {$\bullet$};
        \node (c) at (0,1) [nodo] {$\bullet$};
        \node (d) at (1,1) [nodo] {$\bullet$};
        \node (0) at (0.5,0) [label=right:$0$] [nodo] {$\bullet$};
        
        \draw [-] (x) edge  (a);
        \draw [-] (x) edge  (c);
        \draw [-] (x) edge  (d);
        \draw [-] (y) edge  (a);
        \draw [-] (y) edge  (b);
        \draw [-] (z) edge  (c);
        \draw [-] (z) edge  (d);
        \draw [-] (z) edge  (b);
        \draw [-] (0) edge  (a);
        \draw [-] (0) edge  (c);
        \draw [-] (0) edge  (d);
        \draw [-] (0) edge  (b);
      \end{tikzpicture}
      \caption{The tulip.} \label{fig:tulip}
    \end{center}
  \end{figure}
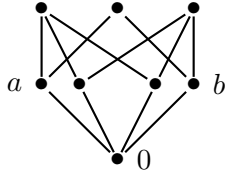
\end{example}

We will end this section by showing that the order dual of $\mathbb{N}\times\mathbb{N}$ admits
only one RRB structure.
It is straightforward to check that the only RRB operation
admissible for a chain is the infimum. The next lemma extends this
observation.
\begin{lemma}
  Let $C_1$ and $C_2$ be chains, and at least one of them has no
  minimum. Then the only admissible RRB on the direct
  product $C_1\times C_2$ is given by the infimum.
\end{lemma}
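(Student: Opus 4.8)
The plan is as follows. Since $P:=C_1\times C_2$ is a meet-semilattice (the pointwise meet exists because each $C_i$ is a chain), the coordinatewise infimum is an admissible operation which is moreover commutative and associative; so existence is automatic and the whole content is uniqueness. Fix an arbitrary admissible RRB operation $\cdot$ on $P$ and let $\mathfrak{D}$ be its semilattice congruence.

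First I would reduce to showing that $\mathfrak{D}$ is the identity. If it is, then the canonical projection $f\colon P\to P/\mathfrak{D}=:S$ is a bijection onto a semilattice, and, being a homomorphism, $f(x\cdot y)=f(x)\cdot_S f(y)=f(y)\cdot_S f(x)=f(y\cdot x)$; injectivity of $f$ then gives $x\cdot y=y\cdot x$ for all $x,y$, and by Lemma~\ref{lem:lower-bounds}(\ref{item:iguales-inf}) a commutative admissible operation must be the infimum.

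So suppose, for contradiction, that $p\neq q$ satisfy $p\mathrel{\mathfrak{D}}q$, i.e.\ $p\cdot q=q$ and $q\cdot p=p$. Then $p,q$ are incomparable (if $p\le q$ then $p\cdot q=p\neq q$). Writing $p=(a,b)$ and $q=(a',b')$, and interchanging $p,q$ if necessary, we may assume $a<a'$ and $b>b'$, so $p\wedge q=(a,b')$. Now $p\cdot q=q$ propagates downwards: for every $r\le q$, Lemma~\ref{lem:aba-ba}(\ref{item:3}) gives $q\cdot r=r$, hence $p\cdot r=p\cdot(q\cdot r)=(p\cdot q)\cdot r=q\cdot r=r$; thus left multiplication by $p$ is the identity on $q{\downarrow}$, and symmetrically left multiplication by $q$ is the identity on $p{\downarrow}$. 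With Lemma~\ref{lema:isomorfos} this shows that $r\mapsto r\cdot q$ and $s\mapsto s\cdot p$ are mutually inverse order-isomorphisms between $p{\downarrow}=a{\downarrow}\times b{\downarrow}$ and $q{\downarrow}=a'{\downarrow}\times b'{\downarrow}$, each fixing $(p\wedge q){\downarrow}=a{\downarrow}\times b'{\downarrow}$ pointwise. Restricting to the complements of this common down-set, we obtain an order isomorphism
\[
  a{\downarrow}\times\{u\in C_2: b'<u\le b\}\;\cong\;\{t\in C_1: a<t\le a'\}\times b'{\downarrow}.
\]
Moreover the congruence property of $\mathfrak{D}$ creates new non-trivial classes underneath: for each $b''<b'$ (such $b''$ exists because $C_2$ has no minimum) one computes $(a',b'')\cdot p=(a,w)$ with $b''<w\le b$ and $(a,w)\mathrel{\mathfrak{D}}(a',b'')$, and the same analysis applies to this pair.

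The remaining step — obtaining the contradiction — is the delicate one, and it is exactly here that the hypothesis ``$C_2$ has no minimum'' must be used. The immediate consequences are: $b'{\downarrow}$ (indeed every $c{\downarrow}$, $c\in C_2$) has no minimal element, so the right-hand side of the displayed isomorphism has no minimal element, whence neither does the left-hand side. The plan is to iterate the downward propagation of $\mathfrak{D}$-classes along an infinite strictly descending chain of $C_2$, producing infinitely many distinct non-trivial $\mathfrak{D}$-classes together with, via Lemma~\ref{lema:isomorfos}, a corresponding family of forced isomorphisms between products of initial segments of $C_1$ and $C_2$; since each $\mathfrak{D}$-class is an antichain it contains at most one element in each ``column'' $\{t\}\times C_2$, and combining this with the forced isomorphisms and the positions of the elements produced should exhibit an impossible configuration. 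I expect this bookkeeping to be the main obstacle: it genuinely cannot be circumvented, since if both $C_1$ and $C_2$ are two-element chains then $P$ is the four-element diamond, which does admit a second, non-commutative RRB structure (with $a\cdot b=b$, $b\cdot a=a$ on its two incomparable elements), so any correct proof must invoke the absence of a minimum precisely at this point.
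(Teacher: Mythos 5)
Your reduction of the statement to ``the semilattice congruence $\mathfrak{D}$ is trivial'' is sound, and the intermediate facts you establish are all correct: left multiplication by $p$ is the identity on $q{\downarrow}$, the maps $r\mapsto r\cdot q$ and $s\mapsto s\cdot p$ are mutually inverse isomorphisms between $p{\downarrow}$ and $q{\downarrow}$ fixing $(p\wedge q){\downarrow}$ pointwise, and $\mathfrak{D}$-classes propagate downwards (from $s\leq q$ one gets $s\cdot p\mathrel{\mathfrak{D}} s\cdot q=s$). But the proof stops exactly where the work begins: no contradiction is ever derived. ``Iterate the downward propagation \dots{} should exhibit an impossible configuration'' is a plan, not an argument, and it is not clear it can be completed along those lines --- an infinite poset can perfectly well carry infinitely many non-trivial antichain congruence classes together with a coherent family of isomorphisms between initial segments, so producing such data is not by itself impossible. (You also silently assume it is $C_2$ that lacks a minimum when you take $b''<b'$; that requires an extra relabelling of coordinates, which is harmless but should be said.) As written, the proposal proves the easy existence half and frames the uniqueness half without closing it.

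For comparison, the paper obtains the contradiction in one short computation and needs none of the iteration. With $a\cdot b=b$, $b\cdot a=a$, $a=\lb a_1,a_2\rb$, $b=\lb b_1,b_2\rb$, $a_1<^1b_1$, $b_2<^2a_2$, and (after relabelling coordinates and the pair) $C_1$ without minimum, pick $a_1'<^1a_1$ and set $c\defi\lb a_1',a_2\rb$, so that $c<a$ and $c\nleq b$. Associativity gives $(c\cdot b)\cdot a=c\cdot(b\cdot a)=c\cdot a=c$, from which one reads off $c\cdot b<b$, $c\cdot b\neq c$, $c\cdot b\nleq a$ and $c\wedge b<c\cdot b$; in coordinates this forces $c\cdot b=\lb s_1,b_2\rb$ with $s_1>^1a_1$, hence $c\cdot b\geq a\wedge b$, and therefore $c=(c\cdot b)\cdot a\geq(a\wedge b)\cdot a=a\wedge b=\lb a_1,b_2\rb$, contradicting $a_1'<^1a_1$. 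Your remark that the square of the two-element chain admits a second RRB structure correctly locates where the no-minimum hypothesis must be used, but locating it is not the same as using it: replace the iteration sketch by a concrete one-step argument of this kind.
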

\begin{proof}
  Let $\leq^i$ denote the order in $C_i$, and $\leq$ the direct
  product order.
  Now assume, by way of
  contradiction, that there are $a,b\in L$ such that 
  $a\cdot b\neq a\y b$. Without loss of generality, assume that 
  $a\cdot b = b$ and $b\cdot a = a$.

  Assume that $c<a$ and $c\nleq b$ for a fixed $c$. Then we have
  \[
  (c\cdot b)\cdot a = c\cdot (b\cdot a) = c\cdot a = c.
  \]
  Hence we deduce that $c\cdot b<b$, $c\cdot b\neq c$, and $c\cdot b\nleq a$.
  From the last one we have $c\y b < c\cdot b$.
  
  Again without loss of generality, we may assume $a =\lb a_1,a_2\rb$,
  $b =\lb b_1,b_2\rb$ with $a_1<^1 b_1$ and 
  $b_2 <^2  a_2$. Also let  $a_1'<^1 a_1$,  
  $c\defi \lb a_1',a_2\rb$ and $s_1,s_2\in\om$ such
  that   $c\cdot b=\lb s_1,s_2\rb$. We have $c\y b = \lb a_1',b_2\rb$.
  Since  $c<a$ and $c\nleq b$, the calculations of the previous
  paragraph apply.
  From  $c\cdot b<b$ and $c\cdot b\nleq a$ we have
  \begin{equation}\label{eq:1}
    s_2 \leq^2 b_2 \qquad b_1\geq^1 s_1 >^1 a_1,
  \end{equation}
  which, together with  $c\y b < c\cdot b$ imply $s_2 = b_2$.
  We conclude that $c\cdot b\geq \lb a_1 , b_2 \rb = a\y b$, and hence
  \[
  \lb a_1',a_2\rb  = c = (c\cdot b)\cdot a  \geq (a\y b)\cdot a = a\y b 
  = \lb a_1 , b_2  \rb,  
  \]
  which contradicts the fact that $a_1' <^1 a_1$.
\end{proof}
The hypotheses on $C_i$ are necessary: The square of the 2-element chain admits
exactly two RRB operations.
\begin{corollary}\label{cor:2D-lattice-unique}
  The only RRB structure admissible for the Cartesian square of the naturals
  with the reverse order is given by the infimum. \qed
\end{corollary}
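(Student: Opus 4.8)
The plan is to read this off directly from the Lemma immediately preceding it. The Cartesian square of the naturals with the reverse order is the poset $C\times C$, where $C\defi(\mathbb{N},\geq)$; equivalently, it is the order dual of $(\mathbb{N}\times\mathbb{N},\leq)$ with the componentwise order. First I would check that $C$ meets the hypothesis of the Lemma: since $(\mathbb{N},\leq)$ has no greatest element, its reverse $C$ has no least element. Thus both factors $C_1=C_2=C$ are chains without minimum, so in particular at least one of them is, and the Lemma applies verbatim with this choice of $C_1,C_2$.

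Consequently the only RRB operation admissible for $C\times C$ is the infimum, which concretely sends $\lb a_1,a_2\rb$ and $\lb b_1,b_2\rb$ to $\lb \max\{a_1,b_1\},\max\{a_2,b_2\}\rb$ (maxima taken in the usual order on $\mathbb{N}$), i.e. to the greatest lower bound in the dual order. There is essentially no obstacle: the sole point to verify is the absence of a minimum in the reverse-ordered naturals, which is immediate from the unboundedness of $(\mathbb{N},\leq)$. If desired, one may add, for emphasis, that the remark just above—that the square of the two-element chain carries two distinct admissible RRB operations—shows that the ``no minimum'' hypothesis cannot be omitted, so the corollary is in this sense sharp.
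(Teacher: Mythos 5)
Your proof is correct and is exactly the intended argument: the paper marks this corollary with \qed precisely because it is the immediate instance of the preceding lemma with $C_1=C_2=(\mathbb{N},\geq)$, a chain without minimum. Your verification of the hypothesis and the remark on sharpness via the two-element chain match the paper's surrounding discussion.
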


\section{Foliated trees}
Some of our more general tools for proving associativity of posets involve
trees. In this paper, a \emph{tree} is a poset $(T,\leq)$, with top element $1$,
such that for every $x\in T$, $x{\uparrow}:=\{y\in T:x\leq y\}$ is linearly
ordered; a \emph{forest} is a disjoint union of trees. 

\begin{prop}\label{afi:compatibles} Let $T$ be a forest and $x,y\in T$. If there is a $z$ such that $z\leq x,y$, then $x$ and $y$ are comparable.
\end{prop}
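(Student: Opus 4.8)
The plan is to argue directly from the definition of a forest, i.e.\ that every element's up-set is linearly ordered, and to use $z$ as a point where the up-sets of $x$ and $y$ both live.

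\medskip

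\noindent\textbf{Approach.} Suppose $z \leq x$ and $z \leq y$. Then both $x$ and $y$ belong to $z{\uparrow} = \{w \in T : z \leq w\}$. Since $T$ is a forest (a disjoint union of trees) and $z$ already witnesses that $x$ and $y$ lie over a common element, $z$ itself lies in some tree $T'$ of the forest, and hence so do $x$ and $y$ (everything above a point of $T'$ is in $T'$, because a tree is upward closed within the disjoint union and up-sets stay in the same connected component). By the defining property of a tree, $z{\uparrow}$ is linearly ordered. Therefore $x$ and $y$, being two elements of the linearly ordered set $z{\uparrow}$, are comparable. This is the entire argument.

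\medskip

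\noindent\textbf{Key steps, in order.} First, reduce to a single tree: note that the relation ``lies above a common element'' keeps us inside one connected component of the disjoint union, so $x, y, z$ all belong to one tree $T'$ and we may forget the rest of the forest. Second, invoke the hypothesis $z \leq x$ and $z \leq y$ to place $x, y \in z{\uparrow}$. Third, apply the tree axiom: $z{\uparrow}$ is a chain, so any two of its members—in particular $x$ and $y$—are $\leq$-comparable.

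\medskip

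\noindent\textbf{Main obstacle.} There is essentially no obstacle; the statement is an immediate unwinding of the definition of ``tree'' as given in this section. The only point requiring a word of care is the reduction from a forest to a single tree: one must observe that if $z \leq x$ then $x$ is in the same tree-summand as $z$, which follows because in a disjoint union of posets no element of one summand is $\leq$-related to an element of another. After that, the result is just the remark that a subset of a chain is a chain.
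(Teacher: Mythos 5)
Your proof is correct and is exactly the argument the paper intends (the paper leaves this proposition without an explicit proof, treating it as immediate from the definition of a tree): $z\leq x$ and $z\leq y$ place both $x$ and $y$ in $z{\uparrow}$, which is linearly ordered by the tree axiom, and the reduction from the forest to a single tree-summand is handled correctly since no $\leq$-relations cross summands in a disjoint union. Nothing is missing.
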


We will say that a tree has \emph{finite branches} if every chain in the tree is finite.
Moreover, given a natural number $n$, we will say that a tree with finite branches has \emph{height n} if every chain has at most $n$ elements and there is at least one chain with $n$ elements.
Finally, we call a tree $T$ \emph{foliated} if for every $x\in T$, there is a minimal element below $x$.
Note that a foliated tree might still have branches without a minimal element.
\begin{theorem}\label{teo:arboles}
The following are equivalent (in $\ZF$):
\begin{enumerate}
\item\label{item:fol_as} Every foliated tree is associative.
\item\label{item:fin_as} Every tree with finite branches is associative.
\item\label{item:h3_as} Every tree with height $3$ is associative.
\item\label{item:AC} The Axiom of Choice.
\end{enumerate}
\end{theorem}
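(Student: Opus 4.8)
The plan is to prove the cycle of implications $(\ref{item:fol_as}) \Rightarrow (\ref{item:fin_as}) \Rightarrow (\ref{item:h3_as}) \Rightarrow (\ref{item:AC}) \Rightarrow (\ref{item:fol_as})$, so that the only genuinely new mathematical content lies in the two ``boundary'' implications $(\ref{item:h3_as}) \Rightarrow (\ref{item:AC})$ and $(\ref{item:AC}) \Rightarrow (\ref{item:fol_as})$; the implications $(\ref{item:fol_as}) \Rightarrow (\ref{item:fin_as})$ and $(\ref{item:fin_as}) \Rightarrow (\ref{item:h3_as})$ are immediate from the definitions, since a tree with finite branches is foliated (each $x{\downarrow}$ is a nonempty finite poset, hence has a minimal element) and a height-$3$ tree is in particular a tree with finite branches.

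For $(\ref{item:AC}) \Rightarrow (\ref{item:fol_as})$, the plan is to build an admissible RRB operation on a foliated tree $T$ by choosing, for each $x \in T$, a minimal element $m(x)$ below $x$ (this is where Choice enters: we select one minimal element from the nonempty set of minimal elements of $x{\downarrow}$). Given such a selector $m$, I would define $x \cdot y$ by cases: if $x \leq y$, set $x\cdot y := x$; if $y < x$, set $x \cdot y := y$; and if $x,y$ are incomparable, set $x\cdot y := m(y)$ (note that in a tree, by Proposition~\ref{afi:compatibles}, if $x$ and $y$ are incomparable then nothing lies below both, so $m(y) \nleq x$, which is consistent with needing $x\cdot y \ne x$; and $m(y) \leq y$ as required by Lemma~\ref{lem:aba-ba}(\ref{item:1})). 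One checks $x \cdot y = x \iff x \leq y$ directly, so $\cdot$ is admissible for the order, and then the work is to verify idempotency (clear) and right-regularity $x\cdot y \cdot x = y \cdot x$ by a case analysis on the comparabilities among $x$, $y$, $x\cdot y$, $y\cdot x$; the tree structure (linearity of upsets, and Proposition~\ref{afi:compatibles}) keeps the number of genuinely distinct cases small, since any two elements with a common lower bound are comparable. For a forest one applies this treewise and takes the product of minimal elements across components, or more simply invokes Lemma~\ref{lem:union-disj}-style reasoning.

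For the reversal $(\ref{item:h3_as}) \Rightarrow (\ref{item:AC})$, the plan is to encode an arbitrary family of nonempty sets as a height-$3$ tree and read a choice function off any admissible RRB operation. Given a family $\{A_i : i \in I\}$ of pairwise disjoint nonempty sets, I would form the poset $T$ with a top element $1$, with the elements of each $A_i$ sitting immediately below $1$, and with one further ``leaf'' $\ell_i$ placed below \emph{every} element of $A_i$ — so $T$ has height $3$ and $\ell_i{\uparrow} = A_i \cup \{1\}$ is indeed a chain only when $|A_i|=1$; to fix this, instead place below each $a \in A_i$ its own private bottom element, and add the structure so that the branching records $A_i$. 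The cleanest encoding is: take $T = \{1\} \cup \bigsqcup_{i} (A_i + \{\ast_i\})$ suitably glued, where from an RRB operation $\cdot$ one extracts, for each $i$, the element $1 \cdot \ast_i$ or a minimal element dominated by a product, using Corollary~\ref{cor:minimal} (products of/with minimal elements are minimal) to guarantee the extracted value lands inside $A_i$. The main obstacle — and the step I would spend the most care on — is designing this gadget so that (a) it genuinely has height $3$, (b) admissibility plus right-regularity of \emph{any} compatible operation forces a canonical element of each $A_i$ to be singled out, and (c) the whole construction is carried out in $\ZF$ without circularity. The hummingbird and crown examples in the paper suggest that height-$3$ trees already have enough rigidity (via Corollary~\ref{cor:xy-yx} on isomorphism of principal ideals and Corollary~\ref{cor:minimal}) to pin down such choices, and I would lean on exactly those lemmas to show the extraction is well-defined and independent of incidental choices, thereby producing the choice function.

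Once these four implications are assembled, the theorem follows. I expect $(\ref{item:h3_as}) \Rightarrow (\ref{item:AC})$ to be the crux: the forward direction $(\ref{item:AC}) \Rightarrow (\ref{item:fol_as})$ is a concrete construction whose verification is a bounded case analysis, whereas the reversal requires finding the \emph{right} height-$3$ tree whose associativity is equivalent to a choice principle over the given family — and making sure the equivalence is tight enough that full $\AC$, not merely some weak fragment, drops out.
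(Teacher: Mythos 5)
Your overall architecture (the cycle of implications, with the two boundary implications carrying the content) matches the paper's, and your plan for $(\ref{item:h3_as})\Rightarrow(\ref{item:AC})$ is in the right spirit. But there is a genuine gap in $(\ref{item:AC})\Rightarrow(\ref{item:fol_as})$: an \emph{arbitrary} choice of a minimal element $m(x)$ below each $x$ does not yield an associative operation. Take the foliated tree with top $1$, a chain $1>y>z$, two incomparable minimal elements $a,b<z$, and a further minimal element $x<1$ incomparable with $y,z,a,b$. A selector may legitimately set $m(y)=a$ and $m(z)=b$. Then $(x\cdot y)\cdot z=m(y)\cdot z=a\cdot z=a$, while $x\cdot(y\cdot z)=x\cdot z=m(z)=b$. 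Note that your list of things to verify mentions idempotency and right-regularity but not associativity, which is exactly the identity that fails here and is the whole point (these are \emph{associative} posets). The missing idea, which is the heart of the paper's construction, is that the selector must be \emph{coherent}: whenever $m(y)\leq z\leq y$ one needs $m(z)=m(y)$. The paper achieves this by well-ordering the set of minimal elements as $\{x_\alpha:\alpha<\kappa\}$ and letting $F(y)$ be $x_\alpha$ for the \emph{least} $\alpha$ with $x_\alpha\leq y$; equivalently, $T$ is partitioned into convex chains $C_\alpha=x_\alpha{\uparrow}\setminus\bigcup_{\beta<\alpha}C_\beta$ and $F$ is constant on each. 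With this coherence property (Claim~\ref{afi:arboles2}) the case analysis for associativity closes; without it, it does not.

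Two smaller points. First, your justification that a tree with finite branches is foliated --- ``each $x{\downarrow}$ is a nonempty finite poset'' --- is wrong: $x{\downarrow}$ can be infinite (e.g.\ an infinite antichain of minimal elements below $x$); it is finiteness of \emph{chains} that is assumed, and the paper simply records this implication as trivial. Second, your height-$3$ gadget for $(\ref{item:h3_as})\Rightarrow(\ref{item:AC})$ is left unresolved (``suitably glued''). The paper's tree is simply $T=\{\calF\}\cup\calF\cup\bigcup\calF$ with $a<B<\calF$ for $a\in B\in\calF$: the sets themselves form the middle layer, so each bottom element has a chain above it. The extraction is then clean: fix any minimal $m\in T$ and any admissible RRB operation; by Corollary~\ref{cor:minimal}, $m\cdot B$ is a minimal element below $B$, i.e.\ an element of $B$, so $B\mapsto m\cdot B$ is a choice function --- no appeal to Corollary~\ref{cor:xy-yx} or to rigidity of the tree is needed, and nothing has to be ``independent of incidental choices.'' Your closing worry about obtaining ``full $\AC$ rather than a fragment'' is a non-issue once this gadget is fixed, since the transversal form of choice for arbitrary families of disjoint nonempty sets is equivalent to $\AC$ over $\ZF$.
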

\begin{proof}
  \ref{item:fol_as}$\ent$\ref{item:fin_as} and
  \ref{item:fin_as}$\ent$\ref{item:h3_as} are trivial. Let us prove
  \ref{item:h3_as}$\ent$\ref{item:AC}. We will
show, using \ref{item:h3_as}, that every non empty family $\mathcal{F}$ of mutually disjoint non empty
sets has a transversal. Let $\mathcal{F}$ be such a family. We now define a
  tree order over $T\defi\{\calF\}\cup\calF\cup \bigcup\calF$: $x< y$
  if and only if $x\neq y$, and $y=\calF$ or $x\in y$ (we are considering
  $\calF\cap\bigcup\calF=\emptyset$. If this was not the case, the
  order obtained would not be that of a tree with height $3$. This can
  be fixed by considering $\{\calF \}\times\{2 \}\cup\calF\times\{1
  \}\cup \union\calF\times\{0\}$ as the universe for $T$ and defining
  the order as: $x'=\langle x,n\rangle<y'=\langle y, m\rangle$ if and
  only if $y'=\langle\calF,2\rangle$ or $x\in y$ and $n<m$). Note that
  this is a tree with height $3$ and therefore it is associative. Fix an
  admissible RRB structure for $T$ and a minimal $m\in T$. Then
  $\{m\cdot B: B\in\calF \}$ is transversal for $\calF$ as for
  $B\in\calF$, $m\cdot B$ is a minimal element below $B$ by
  Corollary~\ref{cor:minimal}.
  
  Let's now see that \ref{item:AC}$\ent$\ref{item:fol_as}. To this end,
  we fix a foliated tree $T$ and we invoke the Axiom of
  Choice to define an admissible RRB structure for $T$.

  Let $M:=\{x\in T:\text{$x$ is minimal}\}$. Define a well order over
  $M$ of type $\kappa$ for $\kappa$ a suitable ordinal. We can now
  think of $M$ as $M=\{x_{\alpha}:\alpha<\kappa \}$. We now proceed to
  decompose $T$ into disjoint convex chains $\{C_\alpha:
    \alpha<\kappa \}$. We define the chains $C_\alpha$ recursively.
  First, we take $C_0:=x_0{\uparrow}$. Now for $\alpha<\kappa$, we
  define
  $C_{\alpha}=x_{\alpha}{\uparrow}\setminus\bigcup_{\beta<\alpha}C_{\beta}$.
  \begin{claim}
    $\bigcup_{\alpha<\kappa}C_{\alpha}=T$.
  \end{claim}
  \begin{proof}
    $M\subset \bigcup_{\alpha<\kappa}C_{\alpha}$ is trivial Let $y\in
    T\setminus M$ and $M_y:=M\cap {y\downarrow}$, which is non-empty
    as the tree is foliated. Now, let
    $\alpha_y:=\min\{\beta<\kappa:x_{\beta}\in M_y\}$. By the
    construction of $\{C_{\alpha}:\alpha<\kappa \}$ we have that for
    every $\beta<\alpha_y$, $y\notin C_\beta$. Also $y\in
    x_{\alpha_y}{\uparrow}$ by hypothesis.  Then $y\in
    x_{\alpha_y}{\uparrow}\setminus\{C_{\beta}:\beta<\alpha_y\}=C_{\alpha_y}$.
  \end{proof}
  Let's now define a function $F$ in the following manner: For $y\in
  M$, $F(y)=y$, and for $y\in T\setminus M$, $F(y)=x_{\alpha_y}$ with
  $\alpha_y$ defined as in the proof of the last claim.
  \begin{claim}\label{afi:arboles2}
    Let $y,z$ such $F(y)\leq z\leq y$, then $F(z)=F(y)$.
  \end{claim}
  \begin{proof}
	If $z\in M$, then we must have $F(z)=z=F(y)$ and the result
        holds trivially. Let's check that the claim holds for $z\in
        T\setminus M$.  Note that since $M_z\subset M_y$, we get
        $\alpha_y\leq\alpha_z$. As we also have $x_{\alpha_y}\in M_z$
        by hypothesis, we obtain that $\alpha_z\leq\alpha_y$. Then
        $F(z)=x_{\alpha_z}=x_{\alpha_y}=F(y)$.
  \end{proof}
  Let's now define an RRB structure for $T$:
  \[
    x\cdot y=\begin{cases} \min\{x,y\} & \text{if they are
      comparable}\\ F(y) &\text{otherwise}
    \end{cases}
  \]

  Let $x,y,z$ in $T$. Then
  \[
    (x\cdot y)\cdot z=\begin{cases} \min\{x,y,z\} & (1) \\ F(z) &
    (2)\vee (4) \\ F(y) & (3)
    \end{cases}
  \]
  Where
  \begin{enumerate}
  \item $x$ and $y$ comparable and $z$ and
    $\min\{x,y\}$ comparable; or equivalently
    $x,y,z$ mutually comparable (By
    Proposition~\ref{afi:compatibles}).
  \item $x$ and $y$ comparable and $z$ and
    $\min\{x,y\}$ incomparable. We consider the
    following subcases:
    \begin{enumerate}
    \item $x\leq y$, $z$ and $x$ incomparable.
    \item $y< x$, $z$ and $y$ incomparable.
    \end{enumerate}
  \item$y$ incomparable with $x$ and $F(y)\leq
    z$. We consider the following subcases:
    \begin{enumerate}
    \item $F(y)\leq z\leq y$, $x$ and $y$
      incomparable.
    \item $F(y)\leq y< z$, $x$ and $y$
      incomparable.
    \end{enumerate}
    This classification is exhaustive because by
    Proposition~\ref{afi:compatibles} $z$ and $y$ must be comparable.
  \item $y$ incomparable with $x$ and $F(y)$ incomparable with $z$. We
    consider the following subcases:
    \begin{enumerate}
    \item $z<y$.
    \item $z$ and $y$ incomparable.
    \end{enumerate}
    There are no more subcases because $F(y)$ is minimal
    below $y$ and $F(y)$ is incomparable with $z$.
  \end{enumerate}
  Let's now check the value of $x\cdot(y\cdot z)$.
  \begin{enumerate}
  \item $x\cdot(y\cdot z)=\min\{x,y,z\}=(x\cdot
    y)\cdot z$.
  \item
    \begin{enumerate}
    \item It cannot be $y\leq z$ (as this would
      imply $x\leq z$), then either $z$ and $y$
      are incomparable or $z<y$. In the first
      case $x\cdot(y\cdot z)=x\cdot
      F(z)=F(z)=x\cdot z=(x\cdot y)\cdot z$. In
      the second case $x\cdot(y\cdot z)=x\cdot
      z=(x\cdot y)\cdot z$.
    \item $y$ and $z$ are incomparable. Then
      $x\cdot(y\cdot z)=x\cdot F(z)=F(z)=y\cdot
      z=(x\cdot y)\cdot z$.
    \end{enumerate}
  \item
    \begin{enumerate}
    \item 
      In this case we know by Proposition~\ref{afi:compatibles} that $x$ and $z$
      must be incomparable as $x$ and $y$ are so: $x\leq z$ would
      imply $x\leq y$, as $z\leq x$, together with
      Proposition~\ref{afi:compatibles} would imply that $x$ and $y$ are
      comparable.  We also have that $F(y)=F(z)$ by
      Claim~\ref{afi:arboles2}. Then $x\cdot(y\cdot z)=x\cdot
      z=F(z)=F(y)=F(y)\cdot z=(x\cdot y)\cdot z$.
    \item  $x\cdot(y\cdot z)=x\cdot y=F(y)=F(y)\cdot z=(x\cdot y)\cdot z$.
    \end{enumerate}
  \item
    \begin{enumerate}
    \item In this case $x,z$ are
      incomparable. Then $x\cdot(y\cdot z)=x\cdot
      z=F(z)=(x\cdot y)\cdot z$.
    \item $x\cdot(y\cdot z)=x\cdot F(z)=F(z)=F(y)\cdot z=(x\cdot y)\cdot z$.
    \end{enumerate}
  \end{enumerate}
\end{proof}

\begin{example}
We now present an application of Theorem \ref{teo:arboles}. Let $T$ be the tree in (a) of Figure~\ref{fig:foliados}. In (b) of Figure~\ref{fig:foliados} we can see a decomposition of $T$ into disjoint convex chains. For the RRB operation induced by this decomposition, we have that $x\cdot y=b$ and $y\cdot x=a$. 
  \begin{figure}[h!]
\begin{center}
\begin{tabular}{c@{\hspace{4em}}c}
      \begin{tikzpicture}
        [>=latex, thick,
          nodo/.style={thick,minimum size=0cm,inner sep=0cm}]
        
        \node (1) at (1.5,1.5) [nodo] {$\bullet$};
        \node (x) at (0,1) [label=left:$x$]   [nodo] {$\bullet$};
        \node (y) at (1,1) [label=right:$y$]  [nodo] {$\bullet$};
        \node (z) at (2,1)  [nodo] {$\bullet$};
        \node (a) at (-1,0)  [nodo] {$\bullet$};
        \node (b) at (0.5,0) [nodo] {$\bullet$};
        \node (c) at (1,0)  [label=right:$b$] [nodo] {$\bullet$};
        \node (d)  at (2,0) [nodo] {$\bullet$};
        \node (e)  at (0,-1) [nodo] {$\bullet$};
        \node (f)  at (0.5,-1) [nodo] {$\bullet$};
        \node (g)  at (1.5,-1) [nodo] {$\bullet$};
        \node (h)  at (2,-1) [nodo] {$\bullet$};
        \node (i)  at (-1,-1) [label=left:$a$]  [nodo] {$\bullet$};
        \draw [-] (y) edge  (1);
        \draw [-] (x) edge  (1);
        \draw [-] (z) edge  (1);
        \draw [-] (y) edge  (c);
        \draw [-] (a) edge  (x);
        \draw [-] (x) edge  (b);
        \draw [-] (d) edge  (z);
        \draw [-] (e) edge  (b);
        \draw [-] (f) edge  (b);
        \draw [-] (g) edge  (d);
        \draw [-] (h) edge  (d);
        \draw [-] (i) edge  (a);
        
      \end{tikzpicture}

&
      \begin{tikzpicture}
        [>=latex, thick,
          nodo/.style={thick,minimum size=0cm,inner sep=0cm}]
        
        \node (1) at (1.5,1.5) [nodo] {$\bullet$};
        \node (x) at (0,1) [label=left:$x$]   [nodo] {$\bullet$};
        \node (y) at (1,1) [label=right:$y$]   [nodo] {$\star$};
        \node (z) at (2,1)   [nodo] {$\diamond$};
        \node (a) at (-1,0) [nodo] {$\bullet$};
        \node (b) at (0.5,0) [nodo] {$\circ$};
        \node (c) at (1,0) [label=right:$b$] [nodo] {$\star$};
        \node (d)  at (2,0)  [nodo] {$\diamond$};
        \node (e)  at (0,-1) [nodo] {$\circ$};
        \node (f)  at (0.5,-1) [nodo] {$\vartriangle$};
        \node (g)  at (1.5,-1) [nodo] {$\diamond$};
        \node (h)  at (2,-1) [nodo] {$\odot$};
        \node (i)  at (-1,-1) [label=left:$a$]  [nodo] {$\bullet$};
        \draw [-] (y) edge  (1);
        \draw [-] (x) edge  (1);
        \draw [-] (z) edge  (1);
        \draw [-] (y) edge  (c);
        \draw [-] (a) edge  (x);
        \draw [-] (x) edge  (b);
        \draw [-] (d) edge  (z);
        \draw [-] (e) edge  (b);
        \draw [-] (f) edge  (b);
        \draw [-] (g) edge  (d);
        \draw [-] (h) edge  (d);
        \draw [-] (i) edge  (a);
        
      \end{tikzpicture}
  \\
      (a) & (b)
    \end{tabular}
    \end{center}
    \caption{An application of Theorem \ref{teo:arboles}. }\label{fig:foliados}
  \end{figure}
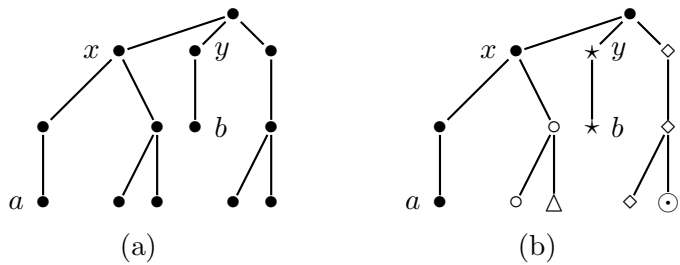
\end{example}

\begin{remark}
  If a tree has a minimal element but is not foliated, by Corollary~\ref{cor:minimal} it cannot be associative.
\end{remark}

\begin{remark}\label{obs:foresta}
  The previous theorem still holds if we chose $T$ to be a forest of
  foliated trees instead of just a foliated tree. This result follows
  from the fact that if we have a forest of foliated trees $T$, then we
  can consider the poset $T':=T+\{1\}$, where $1\notin T$. $T'$ is a foliated tree and is therefore
  associative by the previous theorem. Finally, since $T$ is a decreasing subset
  of $T'$, it
  is associative; in fact, it is a substructure with respect to the operation we
  defined in the previous theorem.
\end{remark}

The last theorem in this section will show that certain homomorphic preimages of
foliated trees are associative.

\begin{theorem}\label{th:homo} 
Let $P$ be a poset. Suppose there exist a forest $T$ consisting of foliated trees and a surjective homomorphism $f:P\rightarrow T$ such that: 
\begin{enumerate}
    \item 
$f(x)<f(y)$ implies $x<y$. 
\item
 For all $a\in f(P)$, $f^{-1}(a)$ is an associative subposet of $P$ and in addition, if $a$ is minimal in $T$, then $f^{-1}(a)$ has a minimum element.
\end{enumerate}
Then, $P$ is an associative poset.
 \end{theorem}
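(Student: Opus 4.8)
The plan is to imitate the construction used to prove Theorem~\ref{teo:arboles}, lifting the convex-chain decomposition of the forest $T$ through $f$ and splicing in the given band structures on the fibres. By Remark~\ref{obs:foresta} (applied to the construction in the proof of Theorem~\ref{teo:arboles}), $T$ carries an admissible RRB operation $\star$ of the explicit form $a\star b=\min\{a,b\}$ when $a,b$ are comparable and $a\star b=F(b)$ otherwise, where $F\colon T\to T$ assigns to each $b$ a minimal element $F(b)\le b$ and satisfies that $F(b)\le c\le b$ implies $F(c)=F(b)$. For each $a\in f(P)=T$ fix an admissible RRB operation $\cdot_a$ on the subposet $f^{-1}(a)$, and for each minimal $a\in T$ let $0_a$ be the minimum of $f^{-1}(a)$ guaranteed by hypothesis~(2). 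Define $G\colon P\to P$ by $G(p):=0_{F(f(p))}$.

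First I would collect the elementary facts that make the construction work. From hypothesis~(1) and the fact that $f$ preserves order: $f(p)<f(q)$ forces $p<q$; consequently, if $p,q$ are incomparable in $P$ and $f(p)\ne f(q)$ then $f(p),f(q)$ are incomparable in $T$, while conversely incomparability of $f(p),f(q)$ in $T$ forces incomparability of $p,q$ in $P$; and if some $p$ is a common lower bound of $q,r$ in $P$ then $f(p)$ is a common lower bound of $f(q),f(r)$ in $T$, so by Proposition~\ref{afi:compatibles} $f(q),f(r)$ are comparable, hence $q,r$ are comparable in $P$ unless $f(q)=f(r)$. Concerning $G$, one checks that $G(p)\le p$; that $G(p)$ is minimal in $P$ (a lower bound $q$ of $G(p)$ has $f(q)\le F(f(p))$, which is minimal in $T$, so $q\in f^{-1}(F(f(p)))$ and therefore $q\ge 0_{F(f(p))}=G(p)$); that $G(G(p))=G(p)$; and the lifted form of Claim~\ref{afi:arboles2}: $G(p)\le r\le p$ implies $G(r)=G(p)$ (apply the displayed property of $F$ to $F(f(p))\le f(r)\le f(p)$).

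I would then define the operation on $P$ by
\[
  x\cdot y=\begin{cases}
    \min\{x,y\} & \text{$x,y$ comparable,}\\
    x\cdot_{f(x)}\,y & \text{$x,y$ incomparable and }f(x)=f(y),\\
    G(y) & \text{$x,y$ incomparable and }f(x)\ne f(y),
  \end{cases}
\]
where in the last clause $f(x)$ and $f(y)$ are automatically incomparable in $T$. This operation is idempotent; each fibre $f^{-1}(a)$ is closed under it and the induced operation there is exactly $\cdot_a$; and its induced quasiorder is precisely $\le$, because $x\cdot y=x$ implies $x\le y$ (in the second clause $x\cdot_{f(x)}y=x$ would force $x\le y$, and in the third $G(y)\le y$ cannot equal the element $x$ incomparable with $y$). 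Hence, once associativity is established, $(P,\cdot)$ is a band whose induced quasiorder is the partial order $\le$, and is therefore right-regular (see Section~\ref{sec:introduction}), so that $\cdot$ witnesses that $P$ is associative. A convenient bookkeeping fact is the identity $f(x\cdot y)=f(x)\star f(y)$, which holds in all three clauses; combined with the associativity of $\star$ it shows that $(x\cdot y)\cdot z$ and $x\cdot(y\cdot z)$ always lie in a single fibre, so only their equality within that fibre must be verified.

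The substantial part is the verification of associativity, which I would do by a case analysis on the comparabilities among $x,y,z$ in $P$ and among $f(x),f(y),f(z)$ in $T$, directly paralleling the argument in Theorem~\ref{teo:arboles} with the fibre operations grafted on. Configurations in which $x,y,z$ all lie in one fibre $f^{-1}(a)$ reduce to the associativity of $\cdot_a$; configurations in which the relevant products land in proper fibres reduce, via $f(x\cdot y)=f(x)\star f(y)$ and the properties of $G$, to the computation already carried out for $(T,\star)$; and several subcases turn out to be vacuous because a $G$-value, being minimal and lying inside the fibre of its argument, is comparable with — indeed below — an element it was hypothesized to be incomparable with. I expect the principal obstacle to be bookkeeping: keeping the case split exhaustive, and, in the ``mixed'' situations (say $x,y$ in one fibre and $z$ in another, or a product hopping between fibres), invoking at the right moments the observation that a common lower bound of two elements makes their $f$-images comparable (or equal), which is what excludes the configurations that would otherwise break associativity.
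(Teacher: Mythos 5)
Your construction is the same as the paper's: the same two preliminary facts (a common lower bound forces comparability or equal $f$-images; equal $f$-images transfer lower bounds), the same map $G$ (the paper's $F(x)=\min f^{-1}(\tilde F(f(x)))$), the same three-clause operation, and the same concluding case analysis, so the proposal is correct and essentially identical in approach. The only caveat is that the exhaustive associativity check is sketched rather than carried out, but the paper likewise defers the full case distinction to an appendix, and your organizing identity $f(x\cdot y)=f(x)\star f(y)$ is a sound (and slightly cleaner) way to run it.
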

 \begin{proof}
We begin by establishing two claims which will be necessary for this proof: 
\begin{claim}\label{claim:homo}
If $f(z)\neq f(y)=f(x)$ and $z\leq y$, then $z\leq x$
\end{claim}
\begin{proof}
$z\leq y\implies f(z)\leq f(y)$, as $f(z)\neq f(y)$, $f(z)<f(y)=f(x)$, then $z<x$ by $1$.
\end{proof}
\begin{claim}\label{claim:homo2}
For all $x,y\in P$, if the set $\{x,y\}$ has a lower bound, then either $x$ and $y$ are comparable, or $f(x)=f(y)$.
\end{claim}
\begin{proof}
  Let $z$ be such a lower bound. That is $z\leq x$ and $z\leq y$. Then
  $f(z)\leq f(x)$ and $f(z)\leq f(y)$. As the codomain of $f$ is a
  forest, $f(x)$ and $f(y)$ are comparable by
  Proposition~\ref{afi:compatibles}. If $f(x)\neq f(y)$, then either $f(x)<f(y)$
  or $f(y)<f(x)$. In both of these cases $x$ and $y$ are comparable.
\end{proof}

Let's take an element $1$ not belonging to $T$. Let
$\tilde{F}:T+\{1\}\rightarrow T+\{1\}$ a function defined like the
one in Theorem~\ref{teo:arboles}. That is, a function such that $\tilde{F}(a)$
is minimal below $a$ for ever $a$ in $T$, and also, for every minimal
$b\in T$, $\tilde{F}^{-1}(b)$ is a convex chain containing $b$.

Let's define $F:P\rightarrow P$
as
\[
  F(x)=\min\{f^{-1}(\tilde{F}(f(x)))\}.
\]
Note that if $f(x)=f(y)$
then $F(x)=F(y)$.
\begin{claim}
  If $F(y)\leq z\leq y$ then $F(y)=F(z)$.
\end{claim}
\begin{proof}
  If $f(y)=f(z)$, the result holds trivially. Otherwise, we have that
  $f(z)<f(y)$. Note that $f(F(y))=\tilde{F}(f(y))\leq f(z)<f(y)$ and
  therefore by \ref{afi:arboles2},
  $\tilde{F}(f(z))=\tilde{F}(f(y))$. This tells us, by definition of
  $F$, that $F(z)=F(y)$.
\end{proof}
Let's take for every $a\in T$, an admissible RRB
$\tilde{\cdot}_a$ for $f^{-1}(a)$.  We now define a binary operation
$\cdot$ over $P$ in the following manner:
\begin{equation}
  \label{eq:prod-homo-onto-tree}
  x\cdot y=
  \begin{cases}
    \min\{x,y\} & x,y \text{ comparable}\\
    x\mathbin{\tilde{\cdot}_a}y  & f(x)=f(y)=a\\
    F(y) & x,y\text{ incomparable and }f(x)\neq f(y)
  \end{cases}
\end{equation}
Note that for all $x,y\in P$ we have $x\cdot F(y)=F(y)$: if they are
comparable, as $F(y)$ is minimal in $P$ we must have $F(y)\leq x$. If
$f(x)=f(F(y))=a$, then they are comparable (as $F(y)=\min
f^{-1}(f(F(y))$) and we can apply the previous reasoning. Otherwise,
we have that $x\cdot F(y)=F(F(y)))=F(y)$ by definition of $F$ and
$\tilde{F}$.

To lighten the notation we will omit the reference to $a$ in $\mathbin{\tilde{\cdot}_a}$, writing just $\tilde{\cdot}$ instead. The associative law $(x\cdot y)\cdot z=x\cdot(y\cdot z)$ for $x,y,z\in P$ is proved by the following case distinction. We first consider whether $f(x)$ is equal to $f(y)$. Next we consider whether $f(y)=f(z)$. Thirdly, we consider the comparability condition between $x$ and $y$. And lastly, we consider the comparability condition between $y$ and $z$. We  will showcase some representative cases.

Consider that $f(x)=f(y)\neq f(z)$ $x\leq y$ and $y,z$ incomparable. Then
$x\cdot(y\cdot z) = x\cdot F(z) = F(z) = x\cdot z = (x\cdot y)\cdot z$
because $x$ and $z$ are incomparable. This is because, if $x \leq y$ and $y$ is incomparable with $z$, it cannot be the case that $z \leq x$. It also cannot be $x \leq z$ because that would imply that ${y, z}$ has a lower bound, which contradicts \ref{claim:homo2}.
Now consider that $f(x)\neq f(y)=f(z)$ $x,y$ incomparable,$y\leq z$. Then
$(x\cdot y)\cdot z = F(y)\cdot z = F(y) = F(y\mathbin{\tilde{\cdot}} z) = x\cdot (y\mathbin{\tilde{\cdot}} z) = x\cdot (y\cdot z)$
because $F(y) \leq z$, $F(y) = F(y\mathbin{\tilde{\cdot}}z)$, and because $x$ is incomparable with $y\mathbin{\tilde{\cdot}} z$ (since it cannot be $x \leq y\mathbin{\tilde{\cdot}} z$ by ~\ref{claim:homo}, and it cannot be $y\mathbin{\tilde{\cdot}} z \leq x$ because that, together with ~\ref{claim:homo2}, would imply $z \leq x$ and that would mean $y \leq x$).
\end{proof}

\begin{example}
We now present an example of an application of Theorem \ref{th:homo}. The theorem tells us that the poset depicted in (a) of Figure \ref{fig:homo} is associative.
\begin{figure}[h]
\begin{center}
\begin{tabular}{c@{\hspace{4em}}c}

\begin{tikzpicture}
   [>=latex, thick,
          nodo/.style={thick,minimum size=0cm,inner sep=0cm}]
        \node (1) at (1.5,2) [nodo] {$\bullet$};
        \node (2) at (1,1.5) [nodo] {$\bullet$};
        \node (3) at (2,1.5) [nodo] {$\bullet$};
        \node (4) at (1,1) [nodo] {$\star$};
        \node (5) at (1,.5)  [nodo] {$\star$};
        \node (6) at (1,0) [nodo] {$\star$};
        \node (7) at (1.75,1) [nodo] {$\circ$};
         \node (8) at (2.25,1)  [nodo] {$\circ$};
        \node (9) at (1.75,.5) [nodo] {$\circ$};
        \node (10) at (2.25,.5) [nodo] {$\circ$};
        \node (11) at (2,0)  [nodo] {$\odot$};
       \node (12) at (0.5,-.5) [nodo] {$\diamond$};
       \node (13) at (0.5,-1)   [nodo] {$\diamond$};
      \node (14) at (1.25,-.5) [nodo] {$\vartriangle$} ;
     \node (15) at (1,-1) [nodo] {$\vartriangle$};
    \node (16) at (1.5,-1) [nodo] {$\vartriangle$};
    \node (17) at (1.25,-1.5) [nodo] {$\vartriangle$};
        \draw [-] (2) edge  (1);
        \draw [-] (3) edge  (1);
        \draw [-] (2) edge  (4);
        \draw [-] (2) edge  (7);
        \draw [-] (2) edge  (8);
        \draw [-] (3) edge  (4);
        \draw [-] (3) edge  (7);
        \draw [-] (3) edge  (8);
        \draw [-] (5) edge  (4);
        \draw [-] (5) edge  (6);
        \draw [-] (12) edge  (6);
        \draw [-] (6) edge  (14);
        \draw [-] (12) edge  (13);
        \draw [-] (15) edge  (14);
        \draw [-] (16) edge  (14);
        \draw [-] (15) edge  (17);
        \draw [-] (16) edge  (17);
        \draw [-] (9) edge  (7);
        \draw [-] (10) edge  (7);
        \draw [-] (9) edge  (8);
        \draw [-] (10) edge  (8);
        \draw [-] (11) edge  (9);
        \draw [-] (10) edge  (11);
        
\end{tikzpicture}
&
\begin{tikzpicture}
   [>=latex, thick,
          nodo/.style={thick,minimum size=0cm,inner sep=0cm}]
        \node (1) at (1.5,2) [nodo] {$\bullet$};
        \node (2) at (1,1.5)[nodo] {$\star$};
        \node (3) at (2,1.5)  [nodo] {$\circ$};
        \node (4) at (.5,1)   [nodo] {$\diamond$};
        \node (5) at (1.5,1) [nodo] {$\vartriangle$};
        \node (6) at (2,1) [nodo] {$\odot$};
  
        \draw [-] (2) edge  (1);
        \draw [-] (3) edge  (1);
        \draw [-] (2) edge  (4);
        \draw [-] (2) edge  (5);
        \draw [-] (3) edge  (6);
  
  \end{tikzpicture}
 \\
 (a) & (b)
\end{tabular}
\end{center}
\caption{ An application of Theorem \ref{th:homo}.}\label{fig:homo}
\end{figure}
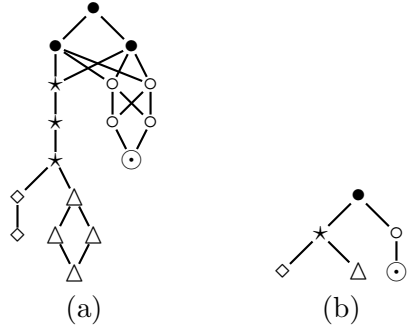
\end{example}

\section{Conclusion}
\label{sec:conclusion}

We have presented many examples which showcase the usefulness of an order
theoretical point of view for studying bands. We proved that we can define, for
certain posets, a band operation by invoking some of its structural properties,
such as the existence of a special order-preserving function for a normal poset,
or a decomposition into disjoint convex sub-chains with minimum for a foliated
tree. More so, we showed that in the first case, this is the \emph{only} way of
defining a right-normal band operation over a normal poset. Here, natural
definability questions arise. There appears to be no way of canonically
assigning a right-normal band operation to every normal poset. We can then ask
ourselves if being able to define this assignment is equivalent to the
Axiom of Choice or some fragment of it.

We proved the equivalence of the associativity of foliated trees and the Axiom
of Choice. This, together with the observations regarding right-normal bands,
shows that assigning a (right-regular or right-normal) band operation to an
associative poset is highly non-canonical. We believe that a broader family of
trees %
might be proven to be associative. It
appears to be that for associative trees there must exist, perhaps as some sort
of limit,
some order type that
``occurs densely''. That is, a type order which appears in arbitrary low levels of the
tree. Therefore, this leads us to believe that every tree admitting a
decomposition into \emph{disjoint convex sub-chains with isomorphic initial
segments} might be associative.

The following is a list of problems/questions which remain open.

 \begin{question} Is every associative disjoint union of meet-semilattices a normal poset?
\end{question}
We know that every disjoint union of meet-semilattices with minimum is normal, so if the answer to this question is negative, at least one of the meet-semilattices in our counterexample must be infinite.

Regarding the definability of RNB structures over normal posets, we have:

\begin{question} Is there a family of posets whose ``normality'' is equivalent to the Axiom of Choice?
\end{question}
A result of this sort would be an analogue of Theorem~\ref{teo:arboles}. We are
also interested in a weaker version of the previous question:
\begin{question}
Is the Axiom of Choice needed to define a right-normal band operation for every normal poset?
\end{question}

The following dwells on possible extensions of
Lemma~\ref{lem:union-disj} or Lemma~\ref{lem:sum-ord2}:
\begin{question}
Is every disjoint union of finite associative posets associative?
\end{question}
We believe the answer to this question to be negative. From Theorem~\ref{th:homo} we know that every disjoint union of associative posets with minimum is associative, so if a counterexample exists, at least one of the associative posets must not have a minimum.
\begin{question} Can Theorem~\ref{teo:arboles} be extended to prove associativity of a more general class of posets/trees?
\end{question}

\paragraph*{Acknowledgment}
We warmly thank the referee for their valuable feedback that greatly helped to
improve the paper, especially the pointer to Green's work and suggestions to our
naming conventions.

\appendix

\section{Additional proofs}
\label{sec:additional-proofs}
\subsection{Multiplication tables}

We present here the multiplication tables of some of the examples presented. In particular, those shown in Section $3$. We begin giving the multiplication table for the RNB mentioned in Example~\ref{ex:norm}.

Consider the following labeling on the poset:

  \begin{figure}[h!]
    \begin{center}
        \begin{tikzpicture}
          [>=latex, thick,
            nodo/.style={thick,minimum size=0cm,inner sep=0cm}]
          
          \node (7) at (1.75,1) [label=left:$a$] [color=black!100, fill=red!0, nodo]  {$\bullet$};
          \node (8) at (2.25,1) [label=right:$b$] [color=black!100, fill=black!0, nodo]  {$\bullet$};
          \node (9) at (1.75,.5) [label=left:$c$] [color=black!100, fill=green!0, nodo] {$\bullet$};
          \node (10) at (2.25,.5) [label=right:$d$] [color=black!100, fill=cyan!0, nodo] {$\bullet$};
          \node (11) at (2,0) [label=below:$e$] [color=black!100, fill=blue!0, nodo] {$\bullet$};

        \draw [-] (9) edge  (7);
        \draw [-] (10) edge  (7);
        \draw [-] (9) edge  (8);
        \draw [-] (10) edge  (8);
        \draw [-] (11) edge  (9);
        \draw [-] (10) edge  (11);
		\end{tikzpicture}
		\end{center}
		\end{figure}

The multiplication table of the RNB obtained is: 

            \begin{center}
            \begin{tabular}{r | c c c c c}
                $\cdot$
                  & $a$ & $b$ & $c$ & $d$ & $e$ \\\hline
                $a$ & $a$ & $b$ & $c$ & $d$ & $e$ \\
                $b$ & $a$ & $b$ & $c$ & $d$ & $e$ \\
                $c$ & $c$ & $c$ & $c$ & $e$ & $e$ \\
                $d$ & $d$ & $d$ & $e$ & $d$ & $e$ \\
                $e$ & $e$ & $e$ & $e$ & $e$ & $e$\\
            \end{tabular}
		\end{center}

We will now present the multiplication tables for each of the operations obtained in  Example~\ref{ex:mult_norm}. Consider the following labeling on the poset:   
  \begin{figure}[h!]
    \begin{center}
        \begin{tikzpicture}
          [>=latex, thick,
            nodo/.style={thick,minimum size=0cm,inner sep=0cm}]
          
          \node (7) at (1.75,1) [label=left:$a$] [color=black!100, fill=red!0, nodo]  {$\bullet$};
          \node (8) at (2.25,1) [label=right:$b$] [color=black!100, fill=black!0, nodo]  {$\bullet$};
          \node (9) at (1.75,.5) [label=left:$c$] [color=black!100, fill=green!0, nodo] {$\bullet$};
          \node (10) at (2.25,.5) [label=right:$d$] [color=black!100, fill=cyan!0, nodo] {$\bullet$};
          \node (11) at (2,0) [label=below:$e$] [color=black!100, fill=blue!0, nodo] {$\bullet$};

          \draw [-] (9) edge  (7);
          \draw [-] (10) edge  (8);
          \draw [-] (11) edge  (9);
          \draw [-] (10) edge  (11);
		\end{tikzpicture}
		\end{center}
		\end{figure}

  The multiplication tables of the RNBs corresponding to the previous example are, respectively:
      \begin{table}[h]
        \begin{minipage}{0.5\textwidth}
            \begin{center}
            \begin{tabular}{r | c c c c c}
                  $\cdot$
                  & $a$ & $b$ & $c$ & $d$ & $e$ \\\hline
                $a$ & $a$ & $e$ & $c$ & $e$ & $e$ \\
                $b$ & $e$ & $b$ & $e$ & $d$ & $e$ \\
                $c$ & $c$ & $e$ & $c$ & $e$ & $e$ \\
                $d$ & $e$ & $d$ & $e$ & $d$ & $e$ \\
                $e$ & $e$ & $e$ & $e$ & $e$ & $e$\\
            \end{tabular}
            \end{center}
        \end{minipage}%
        \hspace{1em}
        \begin{minipage}{0.5\textwidth}
            \begin{center}
            \begin{tabular}{r | c c c c c}
 
      			$\cdot$
                  & $a$ & $b$ & $c$ & $d$ & $e$ \\\hline
                $a$ & $a$ & $d$ & $c$ & $d$ & $e$ \\
                $b$ & $c$ & $b$ & $c$ & $d$ & $e$ \\
                $c$ & $c$ & $d$ & $c$ & $d$ & $e$ \\
                $d$ & $c$ & $d$ & $c$ & $d$ & $e$ \\
                $e$ & $e$ & $e$ & $e$ & $e$ & $e$\\
            \end{tabular}
            \end{center}
        \end{minipage}%
                \hspace{1em}
            \begin{center}
            \begin{tabular}{r | c c c c c}
                $\cdot$
                  & $a$ & $b$ & $c$ & $d$ & $e$ \\\hline
                $a$ & $a$ & $b$ & $c$ & $d$ & $e$ \\
                $b$ & $a$ & $b$ & $c$ & $d$ & $e$ \\
                $c$ & $c$ & $d$ & $c$ & $d$ & $e$ \\
                $d$ & $c$ & $d$ & $c$ & $d$ & $e$ \\
                $e$ & $e$ & $e$ & $e$ & $e$ & $e$\\
            \end{tabular}
            \end{center}

    \end{table}
    
\subsection{Every poset having at most $4$ elements is associative}
Here we will show that every poset having at most $4$ elements is associative.
\begin{itemize}
\item Posets having at most $3$ elements are either a meet-semilattice or a forest of foliated trees (Remark \ref{obs:foresta}).
\item All but two posets with $4$ elements are either a meet-semilattice, a forest of foliated trees or an ordered sum of antichains (Lemma \ref{lem:sum-ord}).
\end{itemize}    
This leaves us to prove the associativity of only two $4$-element posets. These are:

 \begin{figure}[h!]
    \begin{center}
      \begin{tabular}{lcr}
        \begin{tikzpicture}
          [>=latex, thick,
            nodo/.style={thick,minimum size=0cm,inner sep=0cm}]
         
          \node (7) at (2.75,0) [label=right:$0$] [color=black!100, fill=red!0, nodo]  {$\bullet$};
          \node (9) at (1.75,.5) [label=left:$1$] [color=black!100, fill=green!0, nodo] {$\bullet$};
          \node (10) at (2.25,.5) [label=right:$2$] [color=black!100, fill=cyan!0, nodo] {$\bullet$};
          \node (11) at (2,0) [label=left:$3$] [color=black!100, fill=blue!0, nodo] {$\bullet$};

          \draw [-] (11) edge  (9);
          \draw [-] (10) edge  (11);
          
        \end{tikzpicture} & 
        \hspace{2em} & \begin{tikzpicture}
          [>=latex, thick,
            nodo/.style={thick,minimum size=0cm,inner sep=0cm}]
          \node (7) at (1.75,1) [label=left:$a$] [color=black!100, fill=red!0, nodo]  {$\bullet$};
          \node (9) at (1.75,.5) [label=left:$c$] [color=black!100, fill=green!0, nodo] {$\bullet$};
          \node (10) at (2.25,.5) [label=right:$d$] [color=black!100, fill=cyan!0, nodo] {$\bullet$};
          \node (11) at (2.25,1) [label=right:$b$] [color=black!100, fill=blue!0, nodo] {$\bullet$};
          
		  \draw [-] (7) edge  (9);
          \draw [-] (9) edge  (11);
          \draw [-] (10) edge  (11);
          
        \end{tikzpicture}
        \end{tabular}
        \end{center}
        \end{figure}
        For the first poset, note that we can prove its associativity using Theorem~\ref{th:homo}. Using this, we can provide a RRB structure given by: 
        
            \begin{center}
            \begin{tabular}{r | c c c c}
                $\cdot$
                  & $0$ & $1$ & $2$ & $3$ \\\hline
                $0$ & $0$ & $3$ & $3$ & $3$  \\
                $1$ & $0$ & $1$ & $3$ & $3$  \\
                $2$ & $0$ & $3$ & $2$ & $3$  \\
                $3$ & $0$ & $3$ & $3$ & $3$  \\ \\
            \end{tabular}
           \end{center}
           
           As for the second poset, the only admissible RRB is given by:
            \begin{center}
            \begin{tabular}{r | c c c c}
                $\cdot$
                  & $a$ & $b$ & $c$ & $d$ \\\hline
                $a$ & $a$ & $c$ & $c$ & $d$  \\
                $b$ & $c$ & $b$ & $c$ & $d$  \\
                $c$ & $c$ & $c$ & $c$ & $d$  \\
                $d$ & $c$ & $d$ & $c$ & $d$  \\ \\
            \end{tabular}
            \end{center}
\subsection{The crown is not associative}

In the next lemmas, we assume that there is an admissible RRB
structure for the crown poset. By minimality we obtain:
\begin{lemma}\label{lem:5y4por}
  \begin{enumerate}
  \item  $3 · 2 \in \{4, 5\}$.
  \item $4 · 1 \in\{ 3, 5\}$.
  \item $5 · 0 \in\{ 3, 4\}$.
  \end{enumerate}
\end{lemma}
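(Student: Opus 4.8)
The statement is entirely local: we are told an admissible RRB operation $\cdot$ exists on the crown, and by Lemma~\ref{lem:aba-ba}(\ref{item:1}) the product $x\cdot y$ always lies in $y{\downarrow}$. The plan is to apply this, together with the fact that the bottom elements $3,4,5$ are the minimal elements of the crown and the top elements $0,1,2$ are the maximal ones, to pin down where each of the three indicated products can live.

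First I would recall the covering structure: $3\leq 0,1$; $4\leq 0,2$; $5\leq 1,2$; and there are no other strict comparabilities. Now fix any two elements $x,y$ with $y$ minimal, say $y\in\{3,4,5\}$; then $y{\downarrow}=\{y\}$, so by Lemma~\ref{lem:aba-ba}(\ref{item:1}) we get $x\cdot y=y$. That settles products into a bottom element. For the three products in the Lemma the \emph{second} factor is a \emph{top} element, so this shortcut does not apply directly; instead I use Lemma~\ref{lem:aba-ba}(\ref{item:1}) in the form $x\cdot y\leq y$, i.e. $x\cdot y\in y{\downarrow}$, and then exclude the impossible values.

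Concretely, for $3\cdot 2$: we have $3\cdot 2\in 2{\downarrow}=\{2,4,5\}$. It cannot equal $2$, since $3\cdot 2=2$ would force $3\leq 2$ by admissibility (\ref{eq:order_of_RRB}) — wait, admissibility says $x\leq y \iff x\cdot y=x$, so $3\cdot 2 = 2$ does not immediately violate that; rather, I use $a\cdot b\leq b$ and rule out $a\cdot b=b$ only when it contradicts another relation. Let me instead argue: $3\cdot 2 = 3$ is impossible because that would mean $3\leq 2$ (by admissibility), which is false; so $3\cdot 2\in 2{\downarrow}\sm\{3\}$, but actually I need $3\cdot 2\le 2$ \emph{and} $3\cdot 2 \ge$ something. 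The clean route: $3\cdot 2\leq 2$, and $(3\cdot 2)\leq 3$ fails since $3$ is minimal and $3\cdot 2\neq 3$ would be needed... The correct and simple argument is: $3\cdot 2\in 2{\downarrow}=\{2,4,5\}$; if $3\cdot 2=2$ then since $2$ is maximal and $3 \cdot 2 \cdot 3 = 2 \cdot 3 = 3$ (as $3$ minimal) by Lemma~\ref{lem:aba-ba}(\ref{item:aba-ba}), yet $3\cdot 2\cdot 3 = 2\cdot 3 \leq 3$ gives no contradiction alone — so instead observe $3 = 3\cdot 3 = 3\cdot(3\cdot 2)$ requires $3\leq 3\cdot 2$, and the only element of $\{2,4,5\}$ above $3$... none of $4,5$ is above $3$, and $3\leq 2$ is false. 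Hence $3\cdot 2 = 2$ is impossible, leaving $3\cdot 2\in\{4,5\}$. Symmetrically $4\cdot 1\in\{3,5\}$ and $5\cdot 0\in\{3,4\}$. I would present exactly this: $x = x\cdot x = x\cdot(x\cdot y)$ so $x\leq x\cdot y$ when $x$ is minimal and $x\le y$ fails, forcing $x\cdot y$ to be a minimal element below... no — forcing $x\cdot y \neq y$ and $x\cdot y \in y{\downarrow}$, hence $x\cdot y$ is one of the two bottom elements of $y{\downarrow}$.

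The main obstacle is purely bookkeeping: making sure the exclusion "$x\cdot y\ne y$" is justified cleanly. The cleanest justification is: if $a$ is minimal and $a\cdot b = b$, then $a = a\cdot a = (a\cdot b)\cdot a = b\cdot a$ by Lemma~\ref{lem:aba-ba}(\ref{item:aba-ba}), and also $a = a\cdot b = b$ would follow only if... — the truly safe statement is Corollary~\ref{cor:minimal}: if $a$ is minimal then $a\cdot b$ is minimal. So for $3\cdot 2$: since $3$ is minimal, $3\cdot 2$ is minimal, hence $3\cdot 2\in\{3,4,5\}$; combined with $3\cdot 2\leq 2$ and $3\not\leq 2$, we get $3\cdot 2\in\{4,5\}$. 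This is the argument I would write, three times, once per item.

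\begin{proof}
  We use repeatedly that $3,4,5$ are the minimal elements and $0,1,2$ the maximal
  elements of the crown, and that the only strict order relations are
  $3<0$, $3<1$, $4<0$, $4<2$, $5<1$, $5<2$.

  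\emph{Item 1.} Since $3$ is minimal, Corollary~\ref{cor:minimal} gives that
  $3\cdot 2$ is minimal, so $3\cdot 2\in\{3,4,5\}$. On the other hand,
  Lemma~\ref{lem:aba-ba}(\ref{item:1}) yields $3\cdot 2\leq 2$, hence
  $3\cdot 2\in 2{\downarrow}=\{2,4,5\}$. Intersecting, $3\cdot 2\in\{4,5\}$.

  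\emph{Item 2.} Since $4$ is minimal, $4\cdot 1$ is minimal by
  Corollary~\ref{cor:minimal}, so $4\cdot 1\in\{3,4,5\}$; and
  $4\cdot 1\leq 1$ by Lemma~\ref{lem:aba-ba}(\ref{item:1}), so
  $4\cdot 1\in 1{\downarrow}=\{1,3,5\}$. Intersecting, $4\cdot 1\in\{3,5\}$.

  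\emph{Item 3.} Since $5$ is minimal, $5\cdot 0$ is minimal by
  Corollary~\ref{cor:minimal}, so $5\cdot 0\in\{3,4,5\}$; and
  $5\cdot 0\leq 0$ by Lemma~\ref{lem:aba-ba}(\ref{item:1}), so
  $5\cdot 0\in 0{\downarrow}=\{0,3,4\}$. Intersecting, $5\cdot 0\in\{3,4\}$. \qedhere
\end{proof}
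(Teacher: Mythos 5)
Your final proof is correct and matches the paper's intended argument: the paper disposes of this lemma with the single phrase ``by minimality,'' which is exactly your combination of Corollary~\ref{cor:minimal} (a minimal element times anything is minimal) with Lemma~\ref{lem:aba-ba}(\ref{item:1}) ($a\cdot b\leq b$) and the crown's order relations. The exploratory false starts in your plan are unnecessary, but the proof you settled on is clean and complete.
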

So far, we have the situation pictured in Table~\ref{tb:crow-pogroupoid}.

\begin{lemma}\label{lem:equivalences}
  \begin{enumerate}
  \item $5 · 0 = 3$ if and only if $3 · 2 = 5$.
  \item $3 · 2 = 4$ if and only if $4 · 1 = 3$.
  \item $5 · 0 = 4$ if and only if $4 · 1 = 5$.
  \end{enumerate}
\end{lemma}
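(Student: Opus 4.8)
The plan is to reconstruct enough of the multiplication table of a putative admissible RRB $\cdot$ on the crown that each of the three equivalences becomes forced; beyond associativity the only tools needed are the ``baseline'' identities from Lemma~\ref{lem:aba-ba} (namely $x\cdot y\leq y$, $x\cdot m=m$ for $m$ minimal, and $y\cdot x=x$ whenever $x\leq y$) together with Lemma~\ref{lem:5y4por}. From these one reads off, e.g., $3\cdot 1=3$, $4\cdot 2=4$, $5\cdot 1=5\cdot 2=5$, $3\cdot 4=4$, $3\cdot 5=5$, as well as $2\cdot 1\in 1{\downarrow}=\{1,3,5\}$ and $3\cdot 2\in\{4,5\}$.

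The engine will be an auxiliary claim: $3\cdot 2=5$ if and only if $2\cdot 1=5$, proved by expanding $3\cdot 2\cdot 1$ in two ways. The forward implication is short: if $3\cdot 2=5$ then $(3\cdot 2)\cdot 1=5\cdot 1=5$, and among the three possible values of $2\cdot 1$ only $5$ makes $3\cdot(2\cdot 1)=5$. The converse is the step I expect to be the main obstacle, as it is the one place a genuinely ``new'' associativity instance is required: from $2\cdot 1=5$ one first obtains $1\cdot 2=2\cdot 1\cdot 2=(2\cdot 1)\cdot 2=5\cdot 2=5$ via Lemma~\ref{lem:aba-ba}(\ref{item:aba-ba}), and then associativity on the triple $3,1,2$ yields $3\cdot 2=(3\cdot 1)\cdot 2=3\cdot(1\cdot 2)=3\cdot 5=5$.

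Granting the claim, the second equivalence drops out: if $3\cdot 2=4$ then $2\cdot 1\neq 5$, so $4\cdot 1=(3\cdot 2)\cdot 1=3\cdot(2\cdot 1)$ collapses to $3$; and if $4\cdot 1=3$ while $3\cdot 2=5$, then $2\cdot 1=5$ and $4\cdot 1=(4\cdot 2)\cdot 1=4\cdot(2\cdot 1)=4\cdot 5=5$, a contradiction. Finally, the first and third equivalences require no further computation: the crown carries the order-automorphism $\sigma=(0\,1\,2)(3\,5\,4)$, and transporting $\cdot$ along $\sigma$ gives another admissible RRB; the three equivalences form a single $\sigma$-orbit, so the second equivalence for the transported structure reads off as the first equivalence for $\cdot$, and one further turn of $\sigma$ yields the third. (Equivalently, rerun the previous two paragraphs with cyclically relabelled points.)
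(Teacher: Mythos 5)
Your proof is correct, and it reaches the three equivalences by a route that differs from the paper's in its computational core, though both ultimately rest on the same two pillars: one associativity computation plus transport along order-automorphisms of the crown. The paper proves the forward direction of the \emph{first} equivalence directly, by substituting $3=5\cdot 0$ and using $0\cdot 2\cdot 0=2\cdot 0$ to derive $4=3$ from the assumption $3\cdot 2=4$; it then obtains the \emph{converse} of that same item from the reflection $(5\,3)(2\,0)$, and the remaining two items from further automorphisms. You instead take the \emph{second} equivalence as the base case and prove \emph{both} of its directions outright, via the auxiliary claim $3\cdot 2=5\iff 2\cdot 1=5$ — i.e., you pin down the product of two incomparable maximal elements, a quantity the paper's argument never touches — and then a single rotation $(0\,1\,2)(3\,5\,4)$ cycles the second equivalence into the first and third. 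All the steps check out: $2\cdot 1\in\{1,3,5\}$ by Lemma~\ref{lem:aba-ba}(\ref{item:1}), the case analysis in the forward direction of your claim is exhaustive, the converse correctly invokes Lemma~\ref{lem:aba-ba}(\ref{item:aba-ba}) to get $1\cdot 2=5$, and the transported second equivalence does read off as the first (and, after another turn, the third). What your version buys is a base case that is self-contained in both directions and a single automorphism serving the whole lemma; what the paper's buys is a shorter computation (one four-line chain of equalities) at the cost of leaning on symmetry even within its base item. Either argument would serve in Appendix~\ref{sec:additional-proofs}.
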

\begin{proof}
  For the first item, assume $5· 0 = 3$.   If $3 · 2 \neq 5$, by
  Lemma~\ref{lem:5y4por},   $3·2 =4$. We obtain, using
  Lemma~\ref{lem:aba-ba}(\ref{item:aba-ba}),
  \[
  4 = 4 · 0 = 3 · 2 · 0 = 5 · \underline{0 · 2 · 0} = 5 · 2 · 0 = 5 · 0 = 3,
  \]
  a contradiction. Thus we have the direct implication.
  For the converse, the map  given by the permutation $(53)(20)$ is an
  isomorphism.  

  For the other items, there are  isomorphisms that send the first equivalence
  to the other two.
\end{proof}

\begin{table}[h]
  \begin{center}
    \begin{tabular}{c|cccccc}
      $·$ & $0$ & $1$ & $2$ & $3$ & $4$ & $5$ \\\hline
      $0$  & $0$ & ? & ?       & $3$ & $4$ & $5$ \\
      $1$  & ? & $1$ & ?       & $3$ & $4$ & $5$ \\
      $2$  & ? & ? & $2$       & $3$ & $4$ & $5$ \\
      $3$  & $3$ & $3$ & $A$ & $3$ & $4$ & $5$ \\
      $4$  & $4$ & $B$ & $4$ & $3$ & $4$ & $5$ \\
      $5$  & $C$ & $5$ & $5$ & $3$ & $4$ & $5$ 
    \end{tabular}
  \end{center}
  \caption{The partial crown product; $A\in\{4,5\}$, $B\in\{3,5\}$, $C\in\{3,4\}$.}\label{tb:crow-pogroupoid}
\end{table}

\begin{prop}
  The crown poset does not admit an RRB structure.
\end{prop}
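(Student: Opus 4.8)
The plan is to derive a contradiction by combining the three equivalences of Lemma~\ref{lem:equivalences} with the three dichotomies of Lemma~\ref{lem:5y4por}, using the cyclic symmetry of the crown. First I would observe that by Lemma~\ref{lem:5y4por} we have $3\cdot 2\in\{4,5\}$; I split into these two cases. Suppose $3\cdot 2 = 4$. Then by Lemma~\ref{lem:equivalences}(2), $4\cdot 1 = 3$. By Lemma~\ref{lem:equivalences}(3), since $4\cdot 1\neq 5$ we get $5\cdot 0\neq 4$, so by Lemma~\ref{lem:5y4por}(3), $5\cdot 0 = 3$. But then Lemma~\ref{lem:equivalences}(1) forces $3\cdot 2 = 5$, contradicting $3\cdot 2 = 4$.

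Now suppose instead $3\cdot 2 = 5$. Then by Lemma~\ref{lem:equivalences}(1), $5\cdot 0 = 3$. By Lemma~\ref{lem:equivalences}(3), since $5\cdot 0\neq 4$ we get $4\cdot 1\neq 5$, so by Lemma~\ref{lem:5y4por}(2), $4\cdot 1 = 3$. But then Lemma~\ref{lem:equivalences}(2) forces $3\cdot 2 = 4$, contradicting $3\cdot 2 = 5$. In both cases we reach a contradiction, so no admissible RRB structure exists on the crown poset, which is exactly the claim. (The argument is essentially a ``parity'' obstruction around the $3$-cycle of symmetries $(35)(02)$, $(34)(01)$, $(45)(12)$: each equivalence ties the value of one minimal-product to the ``opposite'' choice of another, and going once around the cycle flips the choice an odd number of times.)

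The only subtlety — and what I would flag as the point requiring care rather than genuine difficulty — is making sure the symmetry bookkeeping in Lemma~\ref{lem:equivalences} is applied consistently: one must check that the isomorphisms invoked there (the permutations of the crown induced by its automorphism group) really do carry the first equivalence to the second and third as claimed, so that all three equivalences may be used simultaneously in the case analysis above. Granting that, the proof is a short finite deduction with no computation beyond what is already recorded in Table~\ref{tb:crow-pogroupoid} and the two preceding lemmas.

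\begin{proof}
  By Lemma~\ref{lem:5y4por}(1), $3\cdot 2\in\{4,5\}$. Suppose first that $3\cdot 2=4$. By Lemma~\ref{lem:equivalences}(2), $4\cdot 1=3$; in particular $4\cdot 1\neq 5$, so by Lemma~\ref{lem:equivalences}(3), $5\cdot 0\neq 4$, whence $5\cdot 0=3$ by Lemma~\ref{lem:5y4por}(3). But then Lemma~\ref{lem:equivalences}(1) gives $3\cdot 2=5$, contradicting $3\cdot 2=4$.

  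Suppose instead that $3\cdot 2=5$. By Lemma~\ref{lem:equivalences}(1), $5\cdot 0=3$; in particular $5\cdot 0\neq 4$, so by Lemma~\ref{lem:equivalences}(3), $4\cdot 1\neq 5$, whence $4\cdot 1=3$ by Lemma~\ref{lem:5y4por}(2). But then Lemma~\ref{lem:equivalences}(2) gives $3\cdot 2=4$, contradicting $3\cdot 2=5$.

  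In either case we reach a contradiction. Hence the crown poset admits no admissible RRB structure.
\end{proof}
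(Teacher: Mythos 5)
Your proof is correct and uses exactly the same ingredients as the paper's: Lemma~\ref{lem:5y4por} for the dichotomies and Lemma~\ref{lem:equivalences} for the ties between them, chained into a contradiction. The paper phrases it as a single chain of biconditionals ($4\cdot 1=3 \iff \dots \iff 4\cdot 1=5$) while you split into two cases on the value of $3\cdot 2$, but this is only a cosmetic difference in organization.
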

\begin{proof}
  Assume it is related to a right posemigroup. Then all previous lemmas apply.
  \begin{align*}
    4 · 1 = 3 &\iff 3 · 2 = 4  && \text{Lemma~\ref{lem:equivalences}}\\
    &\iff 3 · 2 \neq 5    && \text{Lemma~\ref{lem:5y4por}}\\
    &\iff 5 · 0 \neq 3   && \text{Lemma~\ref{lem:equivalences}}\\
    &\iff 5 · 0 = 4     && \text{Lemma~\ref{lem:5y4por}}\\
    &\iff 4 · 1 = 5   && \text{Lemma~\ref{lem:equivalences}.}
  \end{align*}
  This contradiction shows that Table~\ref{tb:crow-pogroupoid} can't
  be completed to obtain an associative product.
\end{proof}

\subsection{Preimages of foliated trees}
We present here the complete analysis of the case distinction required by the last paragraphs
of the proof of
Theorem~\ref{th:homo}.

Each case will be named by a $4$-tuple.  Its
first and second coordinates determine if
$f(y)$ is equal to $f(x)$ and $f(z)$ respectively; if $f(x)=f(y)$, then the
first number in the name will be $1$, otherwise it will be $2$. Similar remarks
hold for the second coordinate. The third and fourth coordinate determine the
comparability conditions between $x$ and $y$, and $y$ and $z$ respectively; if
$x\leq y$ the third coordinate will be $1$, if $y<x$ the third coordinate will
be $2$, and if $x$ and $y$ are incomparable, the third coordinate will be
$3$. The fourth coordinate behaves analogously. If an asterisk is present in
any coordinate, it means the reasoning applied in that case holds for all
possible choices for that coordinate.
As an example of these conventions, the name \textbf{[1.2.3.2]}
corresponds to the case in which $f(x)=f(y)$, $f(y)\neq f(z)$, $x$ and $y$ are
incomparable, and $z<y$.

In the following calculations, we denote the product defined in
(\ref{eq:prod-homo-onto-tree}) by juxtaposition.
\begin{description}
\item[{[1.1.$*$.$*$]}]  In this case:
\[(xy)z=(x\mathbin{\tilde{\cdot}} y)\mathbin{\tilde{\cdot}} z= x\mathbin{\tilde{\cdot}}( y\mathbin{\tilde{\cdot}} z)=x(yz)\]
\item[{[$*$.$*$.1.1]}] \[(xy)z=xz=x=xy=x(yz)\]
\item[{[$*$.$*$.1.2]}] \[(xy)z=xz=x(yz)\]
\item[{[$*$.$*$.2.1]}] \[(xy)z=yz=y=xy=x(yz)\]
\item[{[$*$.$*$.2.2]}] \[(xy)z=yz=z=xz=x(yz)\]

 \item[{[1.2.1.3]}] 
            \[x(yz)=xF(z)=F(z)=xz=(xy)z\] 
      
   \item[{[1.2.2.3]}]
            \[(xy)z=yz=F(z)=xF(z)=x(yz)\] 
            As $x\cdot F(z)=F(z)$.
       
    \item[{[1.2.3.1]}]
            \[(xy)z=(x \mathbin{\tilde{\cdot}} y)z=x \mathbin{\tilde{\cdot}} y=xy=x(yz)\]
            As $(x \mathbin{\tilde{\cdot}} y)<z$ by Claim ~\ref{claim:homo}.
        \item[{[1.2.3.2]}]
        \[(xy)z=(x \mathbin{\tilde{\cdot}} y)z=z=xz=x(yz)\] 
       by Claim ~\ref{claim:homo}.
        \item[{[1.2.3.3]}]
        \[(xy)z=(x \mathbin{\tilde{\cdot}} y)z=F(z)=xF(z)=x(yz)\] 
        because $(x\mathbin{\tilde{\cdot}} y)$ must be incomparable with $z$ by Claim~\ref{claim:homo2} as $x\mathbin{\tilde{\cdot}}y\leq y$.

         \item[{[2.1.1.3]}]
                \[(xy)z=xz=x=x(y\mathbin{\tilde{\cdot}}z)=x(yz)\] by Claim~\ref{claim:homo}.
        \item[{[2.1.2.3]}] 
          \[(xy)z=yz=y\mathbin{\tilde{\cdot}} z=x(y\mathbin{\tilde{\cdot}} z)=x(yz)\] 
  by  Claim~\ref{claim:homo}.
                \item[{[2.1.3.1]}] 
               \[(xy)z=F(y)z=F(y)=xy=x(yz)\]
               as $F(y)\leq y\leq z$.
                \item[{[2.1.3.2]}] 
                \[(xy)z=F(y)z=F(z)z=F(z)=xz=x(yz)\]
                because $F(y)=F(z)$ and the fact that $x$ and $z$ are incomparable .
                \item[{[2.1.3.3]}] 
                \[(xy)z=F(y)z=F(y)=F(y\mathbin{\tilde{\cdot}} z)=x(y\mathbin{\tilde{\cdot}} z)=x(yz)\]
   as $F(y)\leq z$, $F(y)=F(y\mathbin{\tilde{\cdot}}z)$ and $x$ and $y\mathbin{\tilde{\cdot}} z$ are incomparable.
    \item[{[2.2.1.3]}]
    \[(xy)z=xz=F(z)=xF(z)=x(yz)\]
    as $x$ and $z$ are incomparable.
                \item[{[2.2.2.3]}]
                \[(xy)z=yz=F(z)=xF(z)=x(yz)\]
                \item[{[2.2.3.1]}]
                \[(xy)z=F(y)z=F(y)=xy=x(yz)\]
                \item[{[2.2.3.2]}]
                \[(xy)z=F(y)z=F(z)=xz=x(yz)\]
                Because if $F(y)\leq z$ then $F(y)=F(z)$. Otherwise they are incomparable.
                \item[{[2.2.3.3]}]
                \[(xy)z=F(y)z=F(z)=xF(z)=x(yz)\] 
               because $F(y)$ and $z$ are incomparable.
         
\end{description}

\providecommand{\noopsort}[1]{}
\begin{small}\end{small}

\end{document}